\documentclass[a4paper,12pt]{article}

\setlength{\textheight}{592pt}
\setlength{\textwidth}{430pt}
\setcounter{section}{0}

\topmargin=0mm
\textheight=230mm
\oddsidemargin=0mm
\evensidemargin=0mm
\textwidth=160mm

\usepackage{setspace} 
\setstretch{1.3} 

\usepackage{amsmath}
\usepackage{amsfonts}
\usepackage{amssymb}
\usepackage{ascmac}
\usepackage{amsthm}
\usepackage{amscd}
\usepackage{color}
\usepackage{enumerate}
\usepackage[dvips]{graphicx}
\usepackage{float}
\usepackage{wrapfig}

  \makeatletter
    
    \@addtoreset{equation}{section}
  \makeatother

\theoremstyle{definition}
\newtheorem{definition}{Definition}[section]
\theoremstyle{plain}
\newtheorem{theorem}[definition]{Theorem}

\newtheorem{lemma}[definition]{Lemma}
\newtheorem{proposition}[definition]{Proposition}
\theoremstyle{remark}
\newtheorem{remark}[definition]{Remark}


\begin{document}

\title{On a quarternification of complex Lie algebras}
\author{Tosiaki Kori 
\\Department of Mathematics\\
Graduate School 
of Science and Engineering\\
Waseda University,\\Tokyo 169-8555, Japan
\\email{ kori@waseda.jp}}
\date{}
\maketitle

\begin{abstract}
We give a definition of the {\it quarternification} of a complex Lie algebra.   By our definition \(\mathfrak{gl}(n,\mathbf{H})\),  \(\mathfrak{sl}(n,\mathbf{H})\), \(\mathfrak{so}^{\ast}(2n)\) and \(\mathfrak{sp}(n)\) are quarternifications of   \(\mathfrak{gl}(n,\mathbf{C})\),  \(\mathfrak{sl}(n,\mathbf{C})\), \(\mathfrak{so}(n,\mathbf{C})\) and \(\mathfrak{u}(n)\) respectively .   
We shall prove that a simple Lie algebra admits the  quarternification.      For the proof we follow the well known argument  to construct a simple Lie algebra from its root system.     
The root space decomposition of this quarternion Lie algebra will be given.    The root space of each fundamental root is complex 2-dimensional.   
 \end{abstract}

2010 Mathematics Subject Classification.  
 17B60,  17B20, 16D40, 15B33. 
 \\
{\bf Key Words }    Lie algebras, root system,  quarternion modules.

\medskip


\section{Introduction}

A {\it quarternion structure} on  a \(\mathbf{C}\)-module \(V\) is a conjugate linear map \(J:\,V\mapsto V\)  that  satisfies  the relation \(J^2\,=\,-\,I\,\), \cite{A}.   \(\,(V,J)\) is called a {\it quarternion  module} or a \(\mathbf{H}\)-{\it module}.    The  quarternion module \(V\) has  a decomposition: \(V=\mathbf{H}\otimes_{\mathbf{C}}V_o\simeq V_o+JV_o\) by a \(\mathbf{C}\)-submodule \(V_o\) and there is a conjugation \(\sigma\) on \(V\) defined by 
\(\sigma(\mathbf{u}_0+J\mathbf{v}_0)=\mathbf{u}_0-J\mathbf{v}_0\) for \(\mathbf{u}_0,\,\mathbf{v}_0\in V_o\).    
There exists also a complex conjugation automorphism  \(\tau\) defined by 
   \(\,
   \tau(\mathbf{u}_0+J\mathbf{v}_0)=\overline {\mathbf{u}_0}+J\overline{ \mathbf{v}_0}\,
\) for  \(\mathbf{u}_0,\,\mathbf{v}_0\in V_o\).     
Let \((V,J)\) be a quarternion module.   We call a \(\mathbf{R}\)-submodule \(W\) of \((V,J)\) a \(\sigma\)-{\it submodule} if \(W\) is invariant under the conjugations \(\sigma\) and \(\tau\).     As an example \(I=\{\,ia+jc\,;\,a\in \mathbf{R},\,c\in \mathbf{C}\}\) is a \(\sigma\)-submodule of \(\mathbf{H}\), but  is not a quarternion module.   We introduce  the concept of \(\sigma\)-{\it submodules} so that we can study Lie algebras like \(\mathfrak{sl}(n,\mathbf{H})\) or \(\mathfrak{so}^{\ast}(2n)\) both as a real Lie algebra and  as an object related to the quarternion structure.      
Let \(\mathfrak{g}\) be a \(\sigma\)-submodule of a quarternion module \((V,J)\).   We call \(\mathfrak{g}\)  a  {\it quarternion Lie algebra } if  \(\mathfrak{g}\) is endowed with a  real Lie algebra structure compatible with the conjugations: 
 \[ \sigma\,[X,Y]=\,[\sigma X,\sigma Y]\,,\quad \tau\,[X,Y]=[\tau X,\tau Y]\,.\]
The involution  \(\sigma\) has eigenvalues  \(\pm 1\).   
Let \(\mathfrak{g}=\mathfrak{g}^{+}+\mathfrak{g}^{-}\) denote the corresponding eigenspace decomposition.   \(\mathfrak{g}^{+}\) becomes a subalgebra of \(\mathfrak{g}\).
     Let  \(\mathfrak{g}_o\) be a complex Lie algebra.   We call a 
 quarternion Lie algebra \(\mathfrak{g}\)  {\it the quarternification of} \(\,\mathfrak{g}_o\) if 
 \(\mathfrak{g}_o\) is a subalgebra of 
 \(\mathfrak{g}^+\) and if 
\(\mathfrak{g}_o+\mathfrak{b}\) for a subspace \(\mathfrak{b}\) of \(\mathfrak{g}^-\)  generates \(\mathfrak{g}\) as a real Lie algebra.     
For example, the Lie algebra \(\mathfrak{so}^{\ast}(2n)\) of \(2n\times 2n\) skew symmetric complex matrices is the quarternification of the complex Lie algebra \(\mathfrak{so}(n,\mathbf{C})\) of  \(n\times n\) skew symmetric complex matrices.  
In fact we have a \(\mathbf{R}\)-algebra isomorphism 
\begin{eqnarray*}
\mathfrak{so}^{\ast}(2n)&=&\left\{ X\in\mathfrak{gl}(n,\mathbf{H})\,:\, \,^tX+X=0\,\right\}
\\[0.2cm]
&\simeq &
\left\{\left(\begin{array}{cc}A& -\overline B\\ B & \overline A\end{array}\right)\in \mathfrak{sl}(2n,\mathbf{C});\, 
\begin{array}{c} \,^tA+A=0 \\ \,^tB=B\end{array} \,\right\}.
\end{eqnarray*}
The latter is \(\mathbf{R}\)-isomorphic to \( \mathbf{H}\otimes_{\mathbf{C}}\mathfrak{so}(n,\mathbf{C})\) 
by the change of matrix representations from \(2n\times 2n\) complex matrix to \(n\times n\) quarternion matrix.   Hence \( \mathfrak{so}^{\ast}(2n)\simeq \mathbf{H}\otimes_{\mathbf{C}}\mathfrak{so}(n,\mathbf{C})=\mathfrak{so}(n,\mathbf{C})+J\mathfrak{so}(n,\mathbf{C})\).     By the same reasoning \(\mathfrak{sp}(n)\,=\,\left\{ X\in\mathfrak{gl}(n,\mathbf{H})\,:\, \,X^{\ast}+X=0\,\right\}\) is the quarternification of of the Lie algebra \(\mathfrak{u}(n)\).     As for the quarternification of complex Lie algebra \(\mathfrak{sl}(n,\mathbf{C})\), we can not have \( \mathbf{H}\otimes_{\mathbf{C}}\mathfrak{sl}(n,\mathbf{C})\) as the quarternification.   It is not a Lie algebra but it generates the real Lie algebra   
 \(\mathfrak{sl}(n,\mathbf{H})\).    \(\mathfrak{sl}(n,\mathbf{H})\) is a quarternifiction of \(\mathfrak{sl}(n,\mathbf{C}) \).     Note that  \(\mathfrak{sl}(n,\mathbf{H})\) is not a \(\mathbf{H}\)-module but a \(\sigma\)-submodule.  
  We shall prove that every simple  Lie algebra has the quarternification.   
 For the proof we apply the well known argument due to Chevalley, Harich-Chandra and Serre to construct the simple Lie algebra from its corresponding root system, \cite{S}.    Let \(\mathfrak{g}_o\) be a simple Lie algebra generated by the fundamental basis \(\{h_i, e_i, f_i\,;\,i=1,\cdots,l\,\}\,\) with the relations:
\begin{eqnarray*}
& [\,h_i,\,h_j\,]=0,\, \quad [h_i.e_j]=c_{ji}e_j\,,\quad
  [\,e_i \,,\,f_i\,] =\,\delta_{ij}\,h_i,\, .  \\[0.2cm]
&  (ad\,e_i)^{1-c_{ji}}(e_j)\,=\,0\,,\quad
(ad\,f_i)^{1-c_{ji}}(f_j)\,=\,0\,,\quad i\neq j\,.
\end{eqnarray*}
Where \(\left(c_{ij}\right)\) is the Cartan matrix of \(\mathfrak{g}_o\).   
  The quarternification \(\mathfrak{g}\) of \(\frak{g}_o\) is a quarternion Lie algebra generated by 
  \[\{\,h_i\,,\,e_i,\,f_i\,,\,Jh_i\,\,Je_i\,,\,Jf_i\,;\quad i=1,\cdots,l\,\}\]
  with the relations for \(\mathfrak{g}_o\,\) augmented by the following relations:
\begin{eqnarray*}
  [\,h_i,\,Jh_j\,]=0,\,& \quad  [\,h_i\,,\,Je_j\,]\,=\,c_{ji}\,Je_j\,, & \quad [\,h_i\,,\,Jf_j\,]\,=\,-c_{ji}\,Jf_j\,, 
 \nonumber\\[0.2cm]
 [\,Jh_i\,,\,e_j\,]\,=\,c_{ji}Je_j\,,&\quad [\,Jh_i\,,\,f_j\,]\,=\,- c_{ji}\,Jf_j\,, & \quad    [\,Jh_i,\,Jh_j\,]=0\,,
\,\nonumber\\[0.2cm]
   [\,e_i \,,\,Jf_i\,] =\,\delta_{ij}\,Jh_i,&\quad  [\,Je_i\,,\,f_i\,] =\,\delta_{ij}Jh_i, &\quad  [\,Je_i\,,\,Jf_i\,] =-\,\delta_{ij}\,h_i,  \,\nonumber\\[0.2cm]
   \,[\,Jh_i\,,\,Je_j\,]\,=\,-c_{ji}\,e_j\,,&\quad
[\,Jh_i\,,\,Jf_j\,]\,=\, c_{ji}f_j\,&\,.
\end{eqnarray*}
 \(\mathfrak{g}\) is a finite dimensional quarternion Lie algebra.     Let \(\mathfrak{h}_o\) be the Cartan subalgebra of the simple Lie algebra \(\mathfrak{g}_o\) and let \(\mathfrak{g}_o= \mathfrak{h}_o\oplus \sum_{\alpha\in \Phi}\,(\mathfrak{g}_o)_{\alpha}\) be the root space decomposition of \(\mathfrak{g}_o\) with the root space  
\(\,(\mathfrak{g}_o)_{\alpha}=\{\xi\in\mathfrak{g}_o;\,ad(h)\xi=\alpha(h)\xi, \quad\forall h\in \mathfrak{h}_o\}\).    
 The root system \(\Phi\) of  \(\mathfrak{g}_o\) may be taken as real linear forms: \(\Phi\subset Hom(\mathfrak{h}_r,\mathbf{R})\).   Where  \(\mathfrak{h}_r\) is the real form of \(\mathfrak{h}_o\).         \(\mathfrak{h}_r\) becomes an abelian subalgebra of the quarternion Lie algebra \(\mathfrak{g}\,\), and the triangular decomposition of \(\mathfrak{g}\) with respect to \(\mathfrak{h}_r\) is given by 
\begin{equation}
\mathfrak{g}=\mathfrak{k}\oplus \mathfrak{e}\oplus \mathfrak{f}\,, \quad \mbox{ with }\quad
 \mathfrak{e}\,=\,\sum_{\alpha\in\Phi_+}\mathfrak{g}_{\alpha}\,, \quad  \mathfrak{f}=\sum_{\alpha\in\Phi_-}\mathfrak{g}_{\alpha}\,,
 \end{equation}
where \(\,\mathfrak{g}_{\alpha}=\{\xi\in\mathfrak{g}\,;\,ad(h)\xi\,=\,\alpha(h)\xi, \quad\forall h\in \mathfrak{h}_r\}\).   
\(\mathfrak{k}\,\) is the ( real ) subalgebra generated by \(\mathbf{H}\otimes_{\mathbf{C} }\mathfrak{h}_o=\mathfrak{h}_o+J\mathfrak{h}_o\).    
\(\mathfrak{e}\) is generated by \(\{e_i,\,Je_i\,;\,i=1,\cdots,\,l\}\) and \(\mathfrak{f}\) is generated by \(\{f_i,\,Jf_i\,;\,i=1,\cdots,\,l\}\).   
For a non-zero root \(\alpha\in \Phi\,\), 
we have \(\mathfrak{g}_{\alpha}\,=\,\mathbf{H}\otimes_{\mathbf{C}} (\mathfrak{g}_o)_{\alpha}\,\).     \(\dim_{\mathbf{C}}\mathfrak{g}_{\alpha_i}=\dim_{\mathbf{C}}\mathfrak{g}_{-\alpha_i}=2\) for a simple root \(\alpha_i\), \(i=1,\,\cdots,\,l\,\).   
Each root space \(\mathfrak{g}_{\alpha}\) is a \(\mathbf{H}\)-module, so are the subalgebras \(\mathfrak{e}\) and \(\mathfrak{f}\),    
but \(\mathfrak{k}\) is a \(\sigma\)-submodule.     For example, for the quarternification \(\mathfrak{g}=\mathfrak{sl}(n,\mathbf{H})\) of \(\mathfrak{g}_o=\mathfrak{sl}(n,\mathbf{C})\),  we have 
 \(\mathfrak{e}=\sum_{i>j}\mathbf{H}\,E_{ij}\), \(\mathfrak{f}=\sum_{i<j}\mathbf{H}\,E_{ij}\) and \(\mathfrak{k}=\mathfrak{h}_o+\sqrt{-1}\mathbf{R}E_{11}+J(\mathfrak{h}_o+\mathbf{C}E_{11})\), where \(\mathfrak{h}_o=\sum_{i=1}^{n-1}\mathbf{C}(E_{ii}-E_{i+1\,i+1})\) is the Cartan subalgebra of \(\mathfrak{sl}(n,\mathbf{C})\).    

    In our forthcoming article \cite{K} we shall investigate current algebras of harmonic spinors on \(S^3\) that take values in a Lie algebra.   
 The spinors ae introduced by  the
 spinor representation of Clifford algebras: 
  \({\rm Clif }_4\,=\, {\rm End}_{\mathbf{R}}(\Delta)=\mathbf{H}(2)\,
\), 
 and the representation \(\Delta\) decomposes into irreducible representations  \(\Delta=\mathbf{H}\oplus\mathbf{H}\,\),  \(\Delta^{\pm}=\mathbf{H}\).   Hence even ( or odd ) spinors are identified with quarternion valued functions.  
    So one of the purpose of present article is to give a steady point of view for such a theory of spinor analysis as combined with Lie algebra theory via the {\it quarternification}.
 
There have been several trials to give the definition of {\it quarternion Lie algebras}.      R. Farnsteiner \cite{Fa} investigated a Lie algebra which is isomorphic to the central quotient of a quarternion division algebra and called it  {\it quarternion Lie algebra}.     
 D. Joyce \cite{J} and D. Widdow \cite{W} gave a definition of {\it quarternion Lie algebra} as an object of their \(\mathbf{AH}\)-module that satisfies the bracket conditions of Lie algebra.    
 \(\mathbf{AH}\)-module is a more restrictive concept than \(\mathbf{H}\)-module.   Their {\it quarternion Lie algebra} is fit to the smooth quarternion-valued vector fields on hypercomplex manifolds.   The above authors tried to give a definition of Lie algebras on the quarternion field.    While our Lie algebra is simply a real Lie algebra invariant under two conjugations of the quarternion field.

  \section{quarternion Lie algebras} 
  
  \subsection{quarternions $\mathbf{H}$}

Let  \(\mathbf{H}\) be the quarternion numbers.    A general quarternion is of the form \(\,x=x_1+x_2i+x_3j+x_4k\,\) with 
\(x_1,x_2,x_3,x_4\in \mathbf{R}\).      Every quarternion 
\(x\) has a unique expression   
\(x=z_1+jz_2\) with \(z_1,z_2\in\mathbf{C}\).    The quarternion multiplication will be from the right \(x\longrightarrow xy\) :
\begin{equation*}
xy=( z_1+jz_2\,)(  w_1+jw_2\,)=(z_1w_1-\overline z_2 w_2)+j(\overline z_1w_2+z_2w_1),
\end{equation*}
for \(x=z_1+jz_2\), \(y=w_1+jw_2\).    
Especially \(\mathbf{C}\) acts on \(\mathbf{H}\) from the right.   
 \(\mathbf{H}\) and \(\mathbf{C}^2\) are isomorphic as \(\mathbf{C}\)-vector spaces:
 \begin{equation}\label{hccoresp}
 \mathbf{H}\,\stackrel{\sim}{\longrightarrow}\,\mathbf{C}^2\,,\quad z_1+jz_2\longrightarrow
\left(\begin{array}{c}z_1\\z_2\end{array}\right)\,.
\end{equation}
The multiplication  of an element  \(g=a+jb\in \mathbf{H}\) from the left yields an endomorphism in \(\mathbf{H}\): \(\{x\longrightarrow gx\}\in End_{\mathbf{H}}(\mathbf{H})\).
Under the identification \(\mathbf{H}\simeq\mathbf{C}^2\)  the left quarternion multiplication  is expressed by a \(\mathbf{C}\)-linear map 
\begin{equation*}
\mathbf{C}^2\ni z=\left(\begin{array}{c}z_1\\z_2\end{array}\right)\,\longrightarrow gz=
\,\left(\begin{array}{cc}
a&-\overline b\\[0.2cm] b&\overline a\end{array}\right)\left(\begin{array}{c}z_1\\z_2\end{array}\right)\,\in \mathbf{C}^2\,.\end{equation*}
This establishes the \(\mathbf{R}\)-linear isomorphism
\begin{equation*}
\mathbf{H}\,\ni\, a+jb\,\stackrel{\simeq}{\longrightarrow}\, \left(\begin{array}{cc}
a&-\overline b\\[0.2cm] b&\overline a\end{array}\right)\,\in MJ(2,\mathbf{C})=\left\{\left(\begin{array}{cc}
a&-\overline b\\[0.2cm] b&\overline a\end{array}\right)\,:\quad a,b\in\mathbf{C}\right\},
\end{equation*}
and a \(\mathbf{R}\)-algebra isomorphism:
 \begin{equation}\label{qtomj}
 \mathbf{H}\simeq\,End_{\mathbf{H}}(\mathbf{H})\simeq MJ(2,\mathbf{C})\,.
 \end{equation}

\subsection{quarternion modules}

 A {\it quarternion structure} on  a  \(\mathbf{C}\)-vector space \(V\) is a conjugate linear map \(J:\,V\mapsto V\):  \(J(a\mathbf{u})=\overline{a}J\mathbf{u}\) for \(a\in \mathbf{C}\,, \mathbf{u}\in V\),  
   that  satisfies  the relation \(J^2\,=\,-\,I\,\), \cite{A}.    A left \(\mathbf{H}\)-module is a real vector space \(U\) with an action of \(\mathbf{H}\) on the left, \((x,\mathbf{v})
\longrightarrow\,x\mathbf{v}\), such that \(x(y\mathbf{v})=(xy)\mathbf{v}\) for all \(x,y\in \mathbf{H}\) and \(\mathbf{v}\in U\).    For example \(\mathbf{H}^n\) is an  \(\mathbf{H}\)-module.     A quarternion structure \(J\) on a \(\mathbf{C}\)-vector space \(V\) is equivalent to the \(\mathbf{H}\)-module structure on \(V\) viewed as a real vector space.   
    Let \(\sigma\) be a \(\mathbf{C}\)-linear involution on the quarternion module \((V,\,J)\) that  anti-commutes \(J\): \(\,J\sigma=\,-\sigma J\).   Let \(V_o\) be the eigensubspace of \(\sigma\) corresponding to the eigenvalue \(+1\).   
  Then \(JV_o=\{Ju; u\in V_o\}\) is the eigensubspace of \(\sigma\) corresponding to the eigenvalue \(-1\) and we have the direct sum decomposition of \(V\):
\begin{equation}\label{qtomj2}
V=V_o+JV_o\,\simeq\,\mathbf{H}\otimes_{\mathbf{C}} V_o\,.
   \end{equation} 
There is also a complex conjugation \(\tau\) on the \(\mathbf{C}\)-module \(V\); \(\tau^2=I\).   \(\tau\) restricts to the conjugate linear involution on \(V_o\,\);     \(\tau (z)=\overline z\), \(z\in V_o\). 
We have   \(\,\tau(z_1+Jz_2)=\overline z_1+J\overline z_2\,
\) for  \(z_1,\,z_2\in V_o\).      \(\sigma\) and \(\tau\) are commuting automorphisms of \(V\): \(\sigma\,\tau=\tau\,\sigma \).    If we let \(V_r\) denote the eigensubspace of \(\tau\) on \(V_o\) corresponding to the eigenvalue \(+1\), then we have 
 \[V_o=\mathbf{C}\otimes_{\mathbf{R}}V_r=V_r+\sqrt{-1}V_r\,,\qquad V=\mathbf{H}\otimes_{\mathbf{R}}V_r\,,\]
 that recovers the quarternionic structure on \(V\).   

\begin{definition}\label{submodule}~~
Let  \(V=V_o+JV_o\,\) be a \(\mathbf{H}\)-module.   Let \(W\) be a \(\mathbf{R}\)-submodule of \(V\).
\begin{enumerate}
 \item 
 \(W\) is called a \(J\)-{\it submodule} of \(V\) if \(W\) is invariant under \(J\), or equivalently, \(W\) has the direct sum decomposition 
 \(W=U_o+JU_o\) for a \(\mathbf{R}\)-vector subspace \(U_o\) of \(V_o\).   
 \item
 \(W\) is called a \(\sigma\)-{\it submodule} of \(V\) if \(W\) is invariant under the conjugate automorphisms \(\tau\) and  \(\sigma\), or equivalently, \(W\) respects the \(\mathbf{Z}_2\)-gradation; \(W=W\cap V_o+W\cap JV_o\).  
\end{enumerate}
 \end{definition}
A  \(J\)-submodule is a \(\sigma\)-submodule.    A \(\sigma\)-submodule \(W=W\cap V_o+W\cap JV_o\,\) is not necessarily a \(\mathbf{C}\)-module.

 In the following,  \(\mathfrak{gl}(n,\mathbf{H})\) denotes the algebra of \(n\times n\)-matrices with entries in  the quarternion numbers \(\mathbf{H}\).   
 
 {\bf Examples}
  \begin{enumerate}
   \item
  Every \(\mathbf{H}\)-submodule of a \(\mathbf{H}\)-module \(V\) is a \(J\)-submodule of \(V\).   \\
 \(\mathfrak{gl}(n,\mathbf{H})\), is a \(\mathbf{H}\)-module.    
 \item
 \(\mathbf{R}^n+J\mathbf{R}^n\) is a \(J\)-submodule of \(\mathbf{H}^n\), but is not a \(\mathbf{H}\)-submodule.   In general, for a \(\mathbf{H}\)-module \(V\),   \(V_r+JV_r\,\) is a \(J\)-submodule  but is not a \(\mathbf{H}\)-submodule.  
 \item
  \(I=\{ia+jc\,;\,a\in \mathbf{R},\,c\in \mathbf{C}\}\) is a \(\sigma\)-submodule of \(\mathbf{H}\) which is not a \(J\)-submodule.   
\item
\(K=\{(1+j)a\,;\,a\in \mathbf{R}\}\) is not a \(\sigma\)-submodule of \(\mathbf{H}\) but a real subspace.
 \item
 Let
 \[\mathfrak{sl}(n,\mathbf{H})\,=\,\{X\in \mathfrak{gl}(n,\mathbf{H});\quad {\rm Re.}Tr\,X=0\,\}.\]
  \(\mathfrak{sl}(n,\mathbf{H})\) is a \(\sigma\)-submodule of  \(\mathfrak{gl}(n,\mathbf{H})\) which is not a  \(J\)-submodule.   
\item 
We shall deal with Lie algebras \(\mathfrak{sp}(n)\) and \(\mathfrak{so}^{\ast}(2n)\) in the next section.   They are \(\sigma\)-submodules of \(\mathfrak{gl}(n,\mathbf{H})\).
  \end{enumerate}

    Let  \(V=\mathbf{H}\otimes_{\mathbf{C}} V_o\) and  \(W=\mathbf{H}\otimes_{\mathbf{C}} W_0\) be \(\mathbf{H}\)-modules.      A homomorphism of \(\mathbf{H}\)-module is by definition a 
\(\mathbf{R}\)-linear map \(T:\,V\longrightarrow W\) such that \(T(x\mathbf{v}\,)=\,xT(\mathbf{v})\,\) for \(\forall  x\in\mathbf{H}\) and \(\forall\mathbf{v}\in V\).    For \(\mathbf{H}\)-modules \(\,V\) and \(W\), We denote by \(Hom_{\mathbf{H}}(V,W)\)   the homomorphisms of \(\mathbf{H}\)-modules.    
\(Hom_{\mathbf{H}}(V,W)\) is a \(\mathbf{H}\)-module if the left action is defined by 
\[(cT)(\mathbf{v})\,=\,T(c\mathbf{v})\,,\quad \mbox{ for }\forall c \in \mathbf{H}\,,\,\forall\mathbf{v}\in V.\]
We denote \(End_{\mathbf{H}}(V)=Hom_{\mathbf{H}}(V,V) \).     

The decomposition \(V=V_o+JV_o\) yields the following 
\(\mathbf{Z}_2\)-gradation of the \(\mathbf{C}\)-module \(End_{\mathbf{C}}(V)\):
\begin{eqnarray}\label{2-grad}
End_{\mathbf{C}}(V)&=&\,End^0_{\mathbf{C}}(V)\oplus End^1_{\mathbf{C}}(V)\\[0.2cm]
End^0_{\mathbf{C}}(V)&=& \, Hom_{\mathbf{C}}(V_o,V_o)\oplus  Hom_{\mathbf{C}}(JV_o,JV_o)\,,\nonumber\\[0.2cm]
End^1_{\mathbf{C}}(V)&=& \, Hom_{\mathbf{C}}(JV_o,V_o)\oplus  Hom_{\mathbf{C}}(V_o,JV_o)\,.\nonumber
\end{eqnarray}
Given a basis of \(V_o\,\), any \(F\in End_{\mathbf{C}}(V)\) has the matrix representation.
\begin{equation}
F=\left (\begin{array}{cc}A&C\\[0.2cm]B&D\end{array}\right)\,:\,
\begin{array}{ccc}
V_o&\,&V_o\\[0.2cm]
\oplus &\longrightarrow&\oplus \\[0.2cm]
JV_o&\,& JV_o\,.\end{array}.
\end{equation}

Put 
\begin{equation}
End^J_{\mathbf{C}}(V)=\,\left\{F\in End_{\mathbf{C}}(V);\quad JF=\overline FJ\right\},
 \end{equation}
 Then 
\(F\,=A+JB\,\in End^J_{\mathbf{C}}(V)\) has the matrix representation   
 \[ A+JB\,=\, \left(
\begin{array}{cc}A&-\overline B\\[0.2cm]B&\overline A\end{array}\right)\,.\]

The isomorphism (\ref{qtomj2}) yields the following \(\mathbf{R}\)-linear isomorphism
 \begin{equation}
\, End_{\mathbf{H}}( V)\,\ni \,A+JB\,\,\stackrel{\simeq}{\longmapsto}\,\left(
\begin{array}{cc}A&-\overline B\\[0.2cm] B&\overline A\end{array}\right)\,\in\,End^J_{\mathbf{C}}(V)\,. 
 \end{equation}
 
We shall write  
\begin{eqnarray}\label{mj2nc}
 MJ(2n,\mathbf{C})&=&\left\{Z\in \mathfrak{gl}(2n,\mathbf{C})\,,\quad JZ=\overline ZJ\right\}\\[0.2cm]
&=& \left\{Z=\,\left(\begin{array}{cc}A& -\overline B\\[0.2cm]
B&\overline A\end{array}\right)\,;\quad A,\,B\in \mathfrak{gl}(n,\mathbf{C})\,\right\}\,.\nonumber
\end{eqnarray}
Then 
 \(\,\mathfrak{gl}(n,\mathbf{H})\) and \( MJ(2n,\mathbf{C})\) are isomorphic as matrix algebras over \(\mathbf{R}\,\).      \( MJ(2n,\mathbf{C})\) is also \(\mathbf{R}\)-algebra isomorphic to \(MJ(2,\mathbf{C})\otimes_{\mathbf{C}}\mathfrak{gl}(n,\mathbf{C})\) .   In fact the isomorphism given by 
   the following transformation of \(2n\times 2n \)-matrices: 
 \[\begin{array}{ccc} \, & (j) &\,\\
 \begin{array}{c}\,\\[0.2cm] 
 (i)\\[0.2cm] \,\end{array}&
 \left(\begin{array}{ccc}\cdots &\cdots \cdot & \cdots \\[0.2cm]
  \cdots & \left(\begin{array}{cc}a_{ij} &-\overline b_{ij}\\ b_{ij} &\overline a_{ij}\end{array}\right) &\cdots\\[0.2cm]
 \cdots &\cdots &\cdots\end{array}\right)
 &\,\end{array}
 \,\longrightarrow\, 
 \begin{array}{c}\,\\[0.2cm]
 (i)\\[0.2cm]
 \,\\[0.2cm]\,\\[0.2cm]
 (n+i)\\[0.2cm]
 \,\end{array}
 \left( \begin{array}{cccccc}\cdots&\cdots&\cdots&\cdots&\cdots&\cdots\\[0.2cm]
 \cdots& a_{ij}&\cdots&\cdots&-\overline b_{ij}&\cdots\\[0.2cm]
 \cdots&\cdots&\cdots&\cdots&\cdots&\cdots\\[0.2cm]
 \cdots&\cdots&\cdots&\cdots&\cdots&\cdots\\[0.2cm]
 \cdots& b_{ij}&\cdots&\cdots&\overline a_{ij}&\cdots\\[0.2cm]
 \cdots&\cdots&\cdots&\cdots&\cdots&\cdots\end{array}\right)\,.
 \]
 \begin{eqnarray}
& MJ(2,\mathbf{C})\otimes_{\mathbf{C}}\mathfrak{gl}(n,\mathbf{C})\,\ni\,
\sum_{i,j=1}^n\left\{\,a_{ij}E^{\prime}_{2i-1\,2j-1}\,+\,b_{ij}E^{\prime}_{2i\,2j-1}\,-\,\overline b_{ij}E^{\prime}_{2i-1\,2j}\,+\,
\overline a_{ij}E^{\prime}_{2i\,2j}\,\right\}\nonumber \\[0.2cm]
&\Longrightarrow \,\sum_{i,j=1}^n\,\left\{\,a_{ij}E_{ij}+\,b_{ij}E_{n+i,j}\,-\,\overline b_{ij}E_{i,n+j}+\overline a_{ij}E_{n+i,n+j}\right\}\in\, MJ(2n,\mathbf{C})\,,\label{coordinchang}
\end{eqnarray}
where \(E_{ij}\) is the \(n\times n\)-matrix with entry \(1\) at \((i,j)\)-place and \(0\) otherwise.   
Since \( MJ(2,\mathbf{C})\simeq\mathbf{H}\)  we have the following \(\mathbf{R}\)-algebra isomorphisms:
\begin{equation}
\mathfrak{gl}(n,\mathbf{H})\,\simeq\, MJ(2n,\mathbf{C})\simeq \mathbf{H}\otimes_{\mathbf{C}}\mathfrak{gl}(n,\mathbf{C})\,= \mathfrak{gl}(n,\mathbf{C})+J\mathfrak{gl}(n,\mathbf{C})\,.
\label{algiso}
\end{equation}

\subsection{Lie algebra \( \mathfrak{gl}(n,\mathbf{H})\) }

We define the following bracket on \( \mathfrak{gl}(n,\mathbf{H})\):
 \begin{eqnarray}\label{qLiebra}
 \left[ X_1+JY_1,\,X_2+JY_2\,\right]\,&=&\,(X_1X_2-X_2X_1- \overline Y_1Y_2 + \overline Y_2Y_1)
 \nonumber \\[0.2cm]
\quad  &&+J(Y_1X_2-Y_2X_1+\overline X_1Y_2-\overline X_2Y_1)\,\label{bracket1}
 \end{eqnarray}
 for \(X_1+JY_1,\,X_2+JY_2\in\mathfrak{gl}(n,\mathbf{H})\), \(X_i,\,Y_i\in \mathfrak{gl}(n,\mathbf{C}),\,i=1,2\).    It gives a real Lie algebra structure on \( \mathfrak{gl}(n,\mathbf{H})\).   
 More conveniently, by the basis \(\{E_{ij}\}_{i,j}\) of \(\mathfrak{gl}(n,\mathbf{C})\) we have
  \begin{equation*}
 \left[\,z_1\otimes E_{ij}\,,\,z_2\otimes E_{kl}\,\right]\,=\, (z_1z_2)\otimes \delta_{jk}E_{il}\,-\,(z_2z_1)\otimes \delta_{il}E_{kj}\,
 \end{equation*}
 for \(z_1, z_2\in\mathbf{H}\).     
It is easy to see that thus defined \(\mathbf{R}\)-linear bracket satisfies the antisymmetry equation and the Jacobi identity.   
  The bracket is evidently invariant under complex and quarternion conjugations: 
\begin{equation}
\sigma \,[X.Y]\,=\,[\,\sigma X\,,\sigma\,Y\,]\,, \quad \tau \,[X.Y]\,=\,[\,\tau X\,,\tau\,Y\,]\,.
\end{equation}
The eigensubspace of the involution \(\sigma\) corresponding to the eigenvalue \(+1\) 
is nothing but the complex Lie algebra \( \mathfrak{gl}(n,\mathbf{C})\).

\subsection{quarternion Lie algebras} 

 Let \((V,J)\) be a  \(\mathbf{H}\)-module.       Let \(\sigma\) be the \(\mathbf{C}\)-linear involution on \(V\) that  anti-commutes \(J\): \(\,J\sigma=\,-\sigma J\), and let \(\tau\) be the conjugate \(\mathbf{C}\)-linear involution on \(V\) that commutes with \(\sigma\): \(\sigma\tau=\tau\sigma\).   Let \(V_o\) be the \(\mathbf{C}\)-eigensubspace of \(\sigma\) corresponding to the eigenvalue \(+1\).   Then \(V=V_o+JV_o\,\).   \(\,V_o\) and \(JV_o\) are invariant under \(\tau\). 
 
 \begin{definition}\label{ql}~~~
    Let  \(\mathfrak{g}\) be a \(\mathbf{R}\)-submodule of the \(\mathbf{H}\)-module \((V,J)\).       \(\,\mathfrak{g}\,\) is called a {\it quarternion Lie algebra } if \(\,\mathfrak{g}\) is equipped with a bracket \([\,\cdot\,,\,\cdot\,]\) that satisfies the following properties:
 \begin{enumerate}
 \item
 \(\left(\,\mathfrak{g}\,,\,\left[\,,\,\right]\,\right)\) is a real Lie algebra:
 \begin{enumerate}
 \item
 The bracket operation is \(\mathbf{R}\)-bilinear.
 \item
 \[
 [\,X\,,\,Y\,]\,+\, [\,Y\,,\,X\,] =0 \qquad\mbox{ for all \(X,Y\in \mathfrak{g}\)}.\]
 \item
 \[[\,X\,,\,[\,Y,\,Z]\,]\,+\,[\,Y\,,\,[\,Z,\,X]\,]\,+\,[\,Z\,,\,[\,X,\,Y]\,]\,=\,0\qquad \mbox{ for all \(\,X,Y,Z\,\in \,\mathfrak{g}\)}.\]
 \end{enumerate}
  \item
 \(\sigma\) and \(\tau\) are homomorphisms of Lie algebra \(\mathfrak{g}\,\):      
\begin{enumerate}
\item
\(\left(\,\mathfrak{g}\,, \,\left[\,,\,\right]\,\right)\) is invariant under the involutions \(\sigma\) and \(\tau\).
 \item
  \[\sigma [\,X\,,\,Y\,]\,=\,[\,\sigma X\,,\,\sigma Y\,\,]\,, \quad \tau[\,X\,,\,Y\,]\,=\,[\,\tau X\,,\,\tau Y\,\,]\,,\quad \forall X,Y\in \,\mathfrak{g}.\]
  \end{enumerate}
 \end{enumerate}
\end{definition}

For a quarternion Lie algebra \(\mathfrak{g}\) we denote by   
\(
\mathfrak{g}^{\pm}\) the eigensubspace of the involution \(\sigma\) with the eigenvalue \(\pm 1\) respectively.     \(\mathfrak{g}^{\pm}\) is a vector subspace of \(
\mathfrak{g}\) invariant under the complex conjugation \(\tau\), and 
 \begin{equation}\label{pmLie}
 \mathfrak{g}=\mathfrak{g}^++\mathfrak{g}^-\,,\quad \mathfrak{g}^+=\mathfrak{g}\cap V_o\,, \quad \mathfrak{g}^-=\mathfrak{g}\cap JV_o\,.
 \end{equation}
  \(\mathfrak{g}^+\) becomes a subalgebra of \(\mathfrak{g}\).
  
 \begin{definition}\label{qf}~~~
 Let  \((\,\mathfrak{g}_o\,,\,\left[\,,\,\right]_o\,\,)\) be a complex or real Lie algebra.   Let 
  \((\, \mathfrak{g}\,,\left[\,,\,\right]\,)\) be a quarternion Lie algebra.      \(\,\mathfrak{g}\)  
is called the {\it quarternification of} \(\,\mathfrak{g}_o\,\)  if  \(\mathfrak{g}_o\) is a ( real ) Lie subalgebra of \(\mathfrak{g}^+\) 
and if there is a ( real ) vector subspace  \(\mathfrak{b}\) of \(\,\mathfrak{g}^-\) such that 
\(\,\mathfrak{g}_o+\mathfrak{b}\) generates \(\mathfrak{g}\) as a real Lie algebra. 
\end{definition}

Let \( \mathfrak{g}\) and \(\mathfrak{g}^{\prime}\) be quarternion  Lie algebras.   
A  homomorphism  \(\varphi\,:\,\mathfrak{g}\longrightarrow\,\mathfrak{g}^{\prime}\,\) of real Lie algebras is  called a {\it homomorphism of quarternion Lie algebras} if 
\begin{equation}
\,\varphi\,(\tau X\,)\,=\,\tau\,\varphi(X)\,\mbox{ and }\quad \varphi\,(\sigma X\,)\,=\,\sigma\,\varphi(X)\,,\quad\mbox{ for }\,\forall X\in\mathfrak{g}.
\end{equation}
We note that the quarternification is uniquely determined up to isomorphisms.

\begin{definition}~~\\
Let \(\mathfrak{g}\) be a quarternion Lie algebra and let  \(\mathfrak{p}\) be an ideal of \(\mathfrak{g}\) viewed as a real Lie algebra.     \(\mathfrak{p}\)  is called an  {\it ideal  of quarternion Lie algebra \(\mathfrak{g}\)}  if \(\mathfrak{p}\) is invariant under the involution \(\sigma\). 
\end{definition}

The quotient space of a quarternion Lie algebra \(\mathfrak{g}\) by an ideal \(\mathfrak{p}\) is endowed with a quarternion Lie algebra structure, where  the  involution  
\(\widehat{\sigma}\) on \( \,\mathfrak{g}/\mathfrak{p}\)  is defined by 
\[\widehat{\sigma}(x+\mathfrak{p})=\sigma x+\mathfrak{p}\,.\]

For a homomorphism of quarternion Lie algebra \(\varphi: \mathfrak{g}\longrightarrow\,\mathfrak{g}^{\prime}\), the kernel \(ker\,\varphi\) becomes an ideal of \(\mathfrak{g}\).

\begin{remark}\label{qmremark}~~~
Here is a remark on our abbreviation.      
 Let \(\mathfrak{g}\) be a quarternion Lie algebra and \(\mathfrak{g}=\mathfrak{g}^++\mathfrak{g}^-\) be the eigensapce decomposition by \(\sigma\); (\ref{pmLie}).   
Let \(\mathfrak{a}_o\) be a complex submodule of \(\mathfrak{g}^+\) and \(\mathfrak{a}_r\) be the real form: \(\mathfrak{a}_o=\mathfrak{a}_r+\sqrt{-1}\mathfrak{a}_r\).       
   Let \(\mathfrak{b}\) be a real Lie subalgebra  of \(\mathfrak{g}\) that is generated over \(\mathbf{R}\) by \(\mathfrak{a}_r+\sqrt{-1} \mathfrak{a}_r+J\mathfrak{a}_r+\sqrt{-1} J\mathfrak{a}_r\).     We abbreviate to call \(\mathfrak{b}\) {\it a quarternion subalgebra of \(\mathfrak{g}\) generated by \(\,\mathfrak{a}_o+J\mathfrak{a}_o\,\)}, though  \(\mathfrak{b}\) may not be a \(\mathbf{C}\)-module. 
  For example, \(\mathfrak{sl}(n,\mathbf{H})\) is {\it a quarternion Lie algebra generated by  \(\,\mathfrak{sl}(n,\mathbf{C})+\,J\mathfrak{sl}(n,\mathbf{C})\)}, though  \(\mathfrak{sl}(n,\mathbf{H})\) is not a \(\mathbf{C}\)-module.   In fact, for \(\mathfrak{a}_o=\mathfrak{sl}(n,\mathbf{C})\), we have 
 \(\mathfrak{a}_r=\mathfrak{sl}(n,\mathbf{R})\) and  \(\mathfrak{b}=\mathfrak{sl}(n,\mathbf{H})\).   \(\mathfrak{b}\) becomes 
   \begin{eqnarray}
   \mathfrak{b}&=&\mathfrak{a}_o+J\mathfrak{a}_o+\sqrt{-1}\mathbf{R}E_{nn}+J\mathbf{C}E_{nn}
    \label{decomp}\\[0.2cm]
   &=&\mathfrak{a}_r+\sqrt{-1}( \mathfrak{a}_r+\mathbf{R}E_{nn})+J( \mathfrak{a}_r+\mathbf{R}E_{nn})+J(\sqrt{-1} \mathfrak{a}_r+\sqrt{-1}\mathbf{R}E_{nn})\,.\nonumber
 \end{eqnarray}

 \end{remark}

\subsection{ Examples}

\begin{enumerate} 
\item
{\it 
\(\,\mathfrak{gl}(n,\mathbf{H})\) is a quarternion Lie algebra that is the quarternification of  \(\mathfrak{gl}(n,\mathbf{C})\).    }\\
We have allreday discussed it in 2.3 and (\ref{mj2nc}):
\begin{eqnarray}
\mathfrak{gl}(n,\mathbf{H})&=&\left\{X\in \mathfrak{gl}(2n,\mathbf{C})\,:\, \overline X\,J=JX\,\right\}
\\[0.2cm]
&=& \left\{\left(\begin{array}{cc}A& -\overline B\\[0.2cm]
B&\overline A\end{array}\right)\,;\quad A,\,B\in \mathfrak{gl}(n,\mathbf{C})\,\right\}\,.\nonumber
\end{eqnarray}

\item
{\it 
\(\mathfrak{sl}(n,\mathbf{R})+J \mathfrak{gl}(n,\mathbf{R})\) is a quarternion Lie subalgebra of \(\mathfrak{gl}(n,\mathbf{H})\).  }\\
 Here we admit the trivial \(\mathbf{R}\)-linear action of \(\tau\).    
There is a \(\mathbf{R}\)-algebra isomorphism; \(\,\mathfrak{sl}(n,\mathbf{R})+J \mathfrak{gl}(n,\mathbf{R})\ni A+JB \stackrel{\simeq }{\longrightarrow} A+\sqrt{-1}B\in \mathfrak{sl}(n,\mathbf{C})\,\).    The latter may be viewed as a quarternion Lie algebra by the trivial action of  \(\sigma\).

 \item
{\it \(\mathfrak{so}^{\ast}(2n)\) is a quarternion Lie algebra.}\\
\(\mathfrak{so}^{\ast}(2n)\)  is the Lie algebra of \(SO^{\ast}(2n)=SL(n,\mathbf{H})\cap O(2n,\mathbf{C})\), \cite{He}:
\begin{eqnarray*}
\mathfrak{so}^{\ast}(2n)&=&\left\{ X\in\mathfrak{gl}(n,\mathbf{H})\,:\, \,^tX+X=0\,\right\}
\,=\,\mathfrak{su}^{\ast}(2n)\cap \mathfrak{so}(2n,\mathbf{C})\\[0.2cm]
&\simeq &
\left\{\left(\begin{array}{cc}A& -\overline B\\ B & \overline A\end{array}\right)\in \mathfrak{sl}(2n,\mathbf{C});\, 
\begin{array}{cc} A&\,\in \mathfrak{so}(n,\mathbf{C}),\,
 \\ B&:\mbox{ Hermitian matrix}
\end{array} \,\right\}.
\end{eqnarray*}
The above \(\mathbf{R}\)-isomorphism is described by the change of matrix representations  
 from \(\mathbf{H}\otimes_{\mathbf{C}}\mathfrak{gl}(n,\mathbf{C})\) to 
\(MJ(2n,\mathbf{C})\), (\ref{coordinchang}).    We note that 
 \(JB\sim \left(\begin{array}{cc}0&-\overline B\\ B&0\end{array}\right)\in \,\mathfrak{so}(2n,\mathbf{C})\) iff  \(B\) is a Hermitian \(n\times n\)-matrix.   
  So \(\mathfrak{so}^{\ast}(2n)\) is a quarternification of  \(\mathfrak{so}(n,\mathbf{C})\).
 \item
{\it \(\mathfrak{sp}(n)\) is a quarternion Lie algebra.}\\
By virtue of the change of matrix representations due to the change of coefficients from \(\mathbf{H}\) to \(\mathbf{C}\), we have the \(\mathbf{R}\)-algebra  isomorphism:
\begin{eqnarray*}
\mathfrak{sp}(n)&=&\left\{X\in \mathfrak{gl}(n,\mathbf{H})\,:\,X^{\ast}+X=0\,\right\}
=\mathfrak{sp}(n,\mathbf{C})\cap \mathfrak{u}(2n)
\\[0.2cm]
&\simeq &\left\{\left(\begin{array}{cc}A&- \overline B\\ B & \overline A\end{array}\right)\in \mathfrak{sl}(2n,\mathbf{C});\, 
\begin{array}{ccc} A&:&\mbox{ skew Hermitian \(n\times n\) 
 matrix}
 \\ B&:&\mbox{ symmetric matrix}
\end{array} \,\right\}.
\end{eqnarray*}
Then 
\begin{equation}
\mathfrak{sp}(n)\,=\,\left\{ A+JB\in\mathfrak{gl}(n,\mathbf{H})\,;\,A\in \mathfrak{u}(n),\, B\in \mathfrak{s}\,\right\},
\end{equation}
 where \(\mathfrak{s}\) is the \(n\times n\) symmetric matrices.   Here we note that \(B\in\mathfrak{s}\) iff 
  \(JB\sim \left(\begin{array}{cc}0&-\overline B\\ B&0\end{array}\right)\in \,\mathfrak{u}(2n)\).   Hence \(\mathfrak{sp}(n)\) is a quarternification of the real Lie algebra \(\mathfrak{u}(n)\).   
  \item
{\it \(\mathfrak{sl}(n,\mathbf{H})\) is a quarternion Lie algebra.   It is the quarternification of  \(\mathfrak{sl}(n,\mathbf{C})\). }\\  
 We shall give a precise explanation of the quarternion Lie algebra \(\mathfrak{sl}(n,\mathbf{H})\)  in the next paragraph.   \(\mathfrak{gl}(n,\mathbf{H})\) can not be a quarternification of  \(\mathfrak{sl}(n,\mathbf{C})\).   
 
\item
The associative algebra generated by a \(\mathbf{H}\)-module has a natural quarternion Lie algebra structure.    Let \(V=\mathbf{H}\otimes_{\mathbf{C}}V_o=V_o+JV_o\) be a \(\mathbf{H}\)-module.   Let \(A_o\) be an associative algebra generated by the \(\mathbf{C}\)-module \(V_o\).   Then \(A=\mathbf{H}\otimes_{\mathbf{C}}A_o\) endowed with the multiplication rule defined by \((z_1\otimes \mathbf{v}_1)\cdot (z_2\otimes \mathbf{v}_2)=
(z_1z_2)\otimes (\mathbf{v}_1\cdot\mathbf{v}_2)\) becomes an associative \(\mathbf{C}\)-algebra  generated by the  \(\mathbf{H}\)-module \(V\).   The conjugate linear map \(J\) extends to an odd endomorphism of the algebra \(A\) , and we have  
 \( A\,=\,\mathbf{H}\otimes A_0\simeq A_o+JA_o \).
 Here we note that the product 
 \(
 J\mathbf{v}_1\cdot  J\mathbf{v}_2\,\cdots\,\cdot J\mathbf{v}_k\) belongs to \(A_o\) if \(k\) is even ( respectively  to  \(JA_o\) if \(k\) is odd ).   
The conjugation automorphisms \(\sigma\) and \(\tau\) on \(V=V_o+JV_o\) is equally extended to the conjugation automorphism on \(A=A_o+JA_o\).   \(A\) has naturally the quarternion Lie algebra structure defined as in  (\ref{qLiebra}), or equivalently:
\begin{equation}\label{qLiebra2}
[\,c_1\otimes \mathbf{v}_1\,,\,c_2\otimes \mathbf{v}_2\,]=(c_1c_2)\otimes (\mathbf{v}_1\cdot\mathbf{v}_2)-
(c_2c_1)\otimes (\mathbf{v}_2\cdot\mathbf{v}_1)\,, \end{equation}
for \(c_1,c_2\in\mathbf{H},\,\mathbf{v}_1,\mathbf{v}_2\in V_o\).

\item
Let  \(U(\mathfrak{g})\) be the universal enveloping algebra of \(\mathfrak{g}\), \cite{D}.   Then  \(\mathbf{H}\otimes_{\mathbf{C}}U(\mathfrak{g})\) is a quarternion Lie algebra, \cite{K-I}.
\end{enumerate}

\subsection{quarternion Lie algebra \( \mathfrak{sl}(n,\mathbf{H})\) }

Now we discuss \(\mathfrak{sl}(n,\mathbf{H})\) as the quarternification of \(\mathfrak{sl}(n,\mathbf{C})\).
By definition 
\begin{equation}
\mathfrak{sl}(n, \mathbf{H})\,=\,\left\{\,Z\in\,\mathfrak{gl}(n, \mathbf{H})\,;\quad Re.\, tr\,Z\,=0\,\right\}.
\end{equation}
We put 
\begin{equation*}
\mathfrak{sk}(n,\mathbf{C})=\{A\in \mathfrak{gl}(n,\mathbf{C})\,;\quad tr\,A\in \sqrt{-1}\mathbf{R}\,\},
\end{equation*}
then \( \mathfrak{sk}(n,\mathbf{C})\) is a real Lie algebra and 
\begin{equation*}
 \mathfrak{sl}(n,\mathbf{H})= \mathfrak{sk}(n,\mathbf{C})+\,J \mathfrak{gl}(n,\mathbf{C})=
 \left\{
 \left(\begin{array}{cc}A&-\overline B\\[0.2cm]
B&\overline A\end{array}\right)\in MJ(2n,\mathbf{C});\, 
\begin{array}{cc}A& \in \mathfrak{sk}(n,\mathbf{C}) \\
B&\in  \mathfrak{gl}(n,\mathbf{C})\end{array}
\right\}.
\end{equation*}
We have the following relation:
\begin{equation*}
\mathbf{H}\otimes_{\mathbf{C}} \mathfrak{sl}(n,\mathbf{C})\,
=  \,\mathfrak{sl}(n,\mathbf{C})+\,J\mathfrak{sl}(n,\mathbf{C})\,\subset \mathfrak{sk}(n,\mathbf{C})+\,J\mathfrak{gl}(n,\mathbf{C})=
\mathfrak{sl}(n,\mathbf{H})\,.
\end{equation*}  

The basis of the complex Lie algebra \(\mathfrak{g}_o=\mathfrak{sl}(n,\mathbf{C})\) is given by  
\[h_1=E_{11}-E_{22}, \,h_2=E_{22}-E_{33},\,\dots,\,h_{n-1}=E_{n-1,n-1}-E_{n,n},\quad
E_{ij}\,, \,i\neq j\,.\]
The first \((n-1)\) elements give a basis of diagonal matrices \(\mathfrak{h}_o\), which is the  Cartan subalgebra of  \(\mathfrak{g}_o\) of \(\dim \mathfrak{h}_o=n-1\).    Let \(\alpha_{ij}\,\) be 
the roots of \(\mathfrak{g}_o\) with respect to \(\mathfrak{h}_o\): \(\alpha_{ij}(h)=\lambda_i-\lambda_{j}\),\(\,h\in \mathfrak{h}_o\), \(i\neq j\,\).
The root space decomposition becomes
 \[\mathfrak{sl}(n,\mathbf{C})= \mathfrak{h}_o\oplus \sum_{i\neq j}\,(\mathfrak{g}_o)_{\alpha_{ij}}\,,\quad
 (\mathfrak{g}_o)_{\alpha_{ij}}=\mathbf{C}E_{ij}\,.\]
     The set of simple roots is \(\,\Pi=\{ \alpha_{i}\,;\,i=1,\cdots,n-1\,\}\),
 where we rewrite \(\alpha_i=\alpha_{i,i+1}\).   
  Fix a standard set of generators of \(\mathfrak{sl}(n,\mathbf{C})\):
      \[h_i\in \mathfrak{h}_o\,,\,e_i=E_{i\,i+1} \in (\mathfrak{g}_o)_{\alpha_i}\,, \,f_i =E_{i+1\,i}\in (\mathfrak{g}_o)_{-\alpha_i}\,,\quad 1\leq i\leq n-1\,,\]
so that 
 \begin{equation}\label{slrelation}
 [\,e_i,\,f_j \,]=h_j\delta_{ij}\,, \,[\,h_i,\,e_j\,]=e_j\,, \,[\,h_i,\,f_j\,]=-f_j\,.
 \end{equation}
\( \{E_{ij}\,;\,1\leq i\neq \, j\leq n\} \) are  generated as follows;
\begin{eqnarray}\label{compose}
E_{ij}&=&\left [\,e_i\,,\,[e_{i+1},\,\cdots\, [e_{j-2},\,e_{j-1}]\,]\cdots\,\right ],\,\mbox{ for } i<j\,,\nonumber\\[0.2cm]
E_{ij}&=&\,\left [\,f_j\,,\,[f_{j+1},\,\cdots\, [f_{i-2},\,f_{i-1}]\,]\cdots\,\right ],\,\mbox{ for } i>j\,.
\end{eqnarray}

The \(\mathbf{H}\)-module \( \mathbf{H}\otimes_{\mathbf{C}} \mathfrak{sl}(n,\mathbf{C})\,
=  \,\mathfrak{sl}(n,\mathbf{C})+\,J\mathfrak{sl}(n,\mathbf{C})\) has as its basis
\[B=\{\,h_i\,,\,Jh_i\,:\,1\leq i\leq n-1\,\}\cup \{\,E_{j\,k}\,,\,JE_{j\,k}\,:\, 1\leq j\neq \,k\leq n\,\}.\]

\begin{proposition}\label{slH}~~~
 \(\mathbf{H}\otimes_{\mathbf{C}} \mathfrak{sl}(n,\mathbf{C})=\mathfrak{sl}(n,\mathbf{C})+J\mathfrak{sl}(n,\mathbf{C})\) generates  the Lie algebra \( \mathfrak{sl}(n,\mathbf{H})\) over \(\mathbf{R}\).        The generators are given by 
\begin{equation}
h_i\,,\,e_i\,,\,f_i\,,\, Jh_i\,,\,Je_i\,,\,Jf_i\,; \quad ( \,i =1,\cdots,n-1\,)\,.   
\end{equation}
Hence the quarternification of \(\mathfrak{sl}(n,\mathbf{C})\) is  \( \mathfrak{sl}(n,\mathbf{H})\).
\end{proposition}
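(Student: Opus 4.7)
The easy direction is the inclusion of the generated real Lie algebra into $\mathfrak{sl}(n,\mathbf{H})$: since $\mathfrak{sl}(n,\mathbf{H})$ has already been shown (at the start of Section~2.6) to be a real Lie subalgebra of $\mathfrak{gl}(n,\mathbf{H})$ equipped with the bracket~(\ref{qLiebra}), and since every element of $\mathfrak{sl}(n,\mathbf{C})+J\mathfrak{sl}(n,\mathbf{C})$ has $\mathrm{Re}\,\mathrm{tr}=0$, the real Lie subalgebra generated by $\mathfrak{sl}(n,\mathbf{C})+J\mathfrak{sl}(n,\mathbf{C})$ lies inside $\mathfrak{sl}(n,\mathbf{H})$.

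For the reverse inclusion I would invoke the decomposition (\ref{decomp}) of Remark~\ref{qmremark} (with $\mathfrak{a}_o=\mathfrak{sl}(n,\mathbf{C})$ and after a relabelling so that $E_{nn}$ plays the role of the extra diagonal entry):
\[
\mathfrak{sl}(n,\mathbf{H})
\;=\;\mathfrak{sl}(n,\mathbf{C})+J\mathfrak{sl}(n,\mathbf{C})+\sqrt{-1}\,\mathbf{R}E_{nn}+J\mathbf{C}E_{nn}.
\]
Thus it suffices to exhibit the three ``missing'' directions $\sqrt{-1}E_{nn}$, $JE_{nn}$ and $\sqrt{-1}JE_{nn}$ as iterated real brackets of elements of $\mathfrak{sl}(n,\mathbf{C})+J\mathfrak{sl}(n,\mathbf{C})$.

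The main computation uses the bracket~(\ref{qLiebra}) on three carefully chosen pairs built from the Chevalley generators $e_{n-1}=E_{n-1,n}$, $f_{n-1}=E_{n,n-1}$. First, $[f_{n-1},Je_{n-1}] = J(\overline{f_{n-1}}\,e_{n-1}) = JE_{nn}$, which produces the $J\mathbf{R}E_{nn}$ piece. Next, $[\sqrt{-1}\,f_{n-1},Je_{n-1}] = J(-\sqrt{-1}E_{nn})=\sqrt{-1}JE_{nn}$ (noting $\sqrt{-1}\,f_{n-1}\in\mathfrak{sl}(n,\mathbf{C})$), completing $J\mathbf{C}E_{nn}$. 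Finally, the bracket of two $J$-elements,
\[
[J(\sqrt{-1}\,e_{n-1}),\,Jf_{n-1}]
\;=\;-\overline{\sqrt{-1}\,e_{n-1}}\,f_{n-1}+\overline{f_{n-1}}\,\sqrt{-1}\,e_{n-1}
\;=\;\sqrt{-1}\bigl(E_{n-1,n-1}+E_{nn}\bigr),
\]
combined with $\sqrt{-1}\,h_{n-1}=\sqrt{-1}(E_{n-1,n-1}-E_{nn})\in\mathfrak{sl}(n,\mathbf{C})$, yields $\sqrt{-1}E_{nn}$ as a real linear combination. These three elements generate the remaining $\sqrt{-1}\mathbf{R}E_{nn}+J\mathbf{C}E_{nn}$ over $\mathbf{R}$, so the generated real Lie algebra exhausts $\mathfrak{sl}(n,\mathbf{H})$. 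The conclusion that $\mathfrak{sl}(n,\mathbf{H})$ is the quaternification of $\mathfrak{sl}(n,\mathbf{C})$ then follows from Definition~\ref{qf} with $\mathfrak{g}_o=\mathfrak{sl}(n,\mathbf{C})$ and $\mathfrak{b}=J\mathfrak{sl}(n,\mathbf{C})$.

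The main obstacle, and the point most easily overlooked, is precisely this gap between $\mathbf{H}\otimes_{\mathbf{C}}\mathfrak{sl}(n,\mathbf{C})$ and $\mathfrak{sl}(n,\mathbf{H})$: the former fails to be closed under the real bracket because $[JA,JB]=-\overline{A}B+\overline{B}A$ can have imaginary (not zero) trace. Identifying the image of this ``trace defect'' as exactly $\sqrt{-1}\mathbf{R}E_{nn}+J\mathbf{C}E_{nn}$, and checking that suitable choices of $A,B$ from the Chevalley basis of $\mathfrak{sl}(n,\mathbf{C})$ actually realise each of the three missing directions, is the crux; once one has the three bracket identities above the rest of the argument is a matter of bookkeeping using the decomposition~(\ref{decomp}).
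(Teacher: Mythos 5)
Your strategy coincides with the paper's: bound the generated algebra above by the trace condition, use the decomposition~(\ref{decomp}) of Remark~\ref{qmremark}, and realise the finitely many missing real directions as explicit brackets (the paper does precisely this, producing \(\sqrt{-1}E_{22}\) from \([\sqrt{-1}Jh_1,Jh_2]\) and \(\sqrt{-1}J(E_{ii}+E_{nn})\) from \([JE_{ni},\sqrt{-1}E_{in}]\)). However, your execution has a genuine gap: the first key identity is false. By~(\ref{qLiebra}), the \(J\)-component of \([X,JY]\) is \(\overline{X}Y-YX\), not \(\overline{X}Y\); you dropped the term \(-YX\). Thus
\[
[\,f_{n-1}\,,\,Je_{n-1}\,]\;=\;J\bigl(\overline{f_{n-1}}\,e_{n-1}-e_{n-1}f_{n-1}\bigr)\;=\;J\bigl(E_{nn}-E_{n-1,n-1}\bigr)\;=\;-Jh_{n-1}\,,
\]
which lies in \(J\mathfrak{sl}(n,\mathbf{C})\) already --- as it must, since \([Je_i,f_j]=\delta_{ij}Jh_i\) is one of the defining relations~(\ref{relation2}). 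So this bracket produces nothing new, and your argument never reaches the direction \(J\mathbf{R}E_{nn}\): of the three missing real dimensions in \(\sqrt{-1}\mathbf{R}E_{nn}+J\mathbf{C}E_{nn}\) you obtain only two. The same forgotten term infects your second identity: the correct value is \([\sqrt{-1}f_{n-1},Je_{n-1}]=J\bigl(-\sqrt{-1}(E_{n-1,n-1}+E_{nn})\bigr)\), not \(J(-\sqrt{-1}E_{nn})\); here the error is harmless modulo \(J\mathfrak{sl}(n,\mathbf{C})\) (the extra term \(-\sqrt{-1}E_{n-1,n-1}\equiv-\sqrt{-1}E_{nn}\) merely doubles the coefficient), so that bracket does supply the \(\sqrt{-1}JE_{nn}\)-direction, and your third identity is correct as written.

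The gap is repairable within your own scheme: insert \(\sqrt{-1}\) on the \(J\)-side instead, e.g.
\[
[\,J(\sqrt{-1}f_{n-1})\,,\,\sqrt{-1}e_{n-1}\,]\;=\;-J\bigl(E_{n-1,n-1}+E_{nn}\bigr)\;\equiv\;-2\,JE_{nn} \pmod{J\mathfrak{sl}(n,\mathbf{C})}\,,
\]
which together with your (corrected) second and third identities exhausts \(\sqrt{-1}\mathbf{R}E_{nn}+J\mathbf{C}E_{nn}\) and completes the proof. One further caution: elements such as \(\sqrt{-1}f_{n-1}\) and \(J(\sqrt{-1}e_{n-1})\) are legitimate only under the paper's convention (Remark~\ref{qmremark}) that generation by the listed \(h_i,e_i,f_i,Jh_i,Je_i,Jf_i\) means generation over \(\mathbf{R}\) by their \(\mathbf{C}\)-spans, i.e.\ by all of \(\mathfrak{sl}(n,\mathbf{C})+J\mathfrak{sl}(n,\mathbf{C})\); purely real combinations of the six families have real structure constants and would generate only inside \(\mathfrak{gl}(n,\mathbf{R})+J\mathfrak{gl}(n,\mathbf{R})\), which contains none of the three missing directions. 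You should state this reading explicitly before invoking the brackets above.
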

\begin{proof}~~~
 The basis \(B\) of \(\,\mathfrak{sl}(n,\mathbf{C})+\,J\mathfrak{sl}(n,\mathbf{C})\)
augmented by the two elements \(\sqrt{-1}\,E_{2\,2}\in\,\mathfrak{sk}(n,\mathbf{C})\setminus \mathfrak{sl}(n,\mathbf{C})\) and \(\sqrt{-1}\,J( E_{ii}+E_{nn})\in J(\mathfrak{gl}(n,\mathbf{C})\setminus \mathfrak{sl}(n,\mathbf{C}))\) present a basis of  \(\mathfrak{sl}(n,\mathbf{H})=\mathfrak{sk}(n,\mathbf{C})+J\mathfrak{gl}(n,\mathbf{C})\) ( as a \(\sigma\)-submodule of \(\mathfrak{gl}(n,\mathbf{H})\) ).   So we shall show that these two elements are generated by \(B\) ( as a real Lie algebra ).
In fact, we have  \(\,[\sqrt{-1}\,Jh_{1}\,,\,Jh_{2}\,]\,=\, -2\sqrt{-1}\,E_{2\,2}\), and  
  \([\,J(E_{ni}),\,\sqrt{-1}E_{in}\,]=\,\sqrt{-1}J( E_{ii}+E_{nn})\).   
We note that \(\mathfrak{sl}(n,\mathbf{H})\ominus (\mathfrak{sl}(n,\mathbf{C})+J\mathfrak{sl}(n,\mathbf{C}))\) 
belongs to the root space with root \(0\,\):   
\begin{equation*}
ad(h)\,E_{2\,2}\,=\,0\,, \quad ad(h)\,J(\,E_{i\,i}+E_{n\,n}\,)\,=0\,,\qquad\forall h\in \mathfrak{h}_o\,.
\end{equation*}
\end{proof}

\subsection{ Free quarternion Lie algebra }

Let  \(L_o\) be a Lie algebra over \(\mathbf{C}\) generated by a basis \(X\) of a \(\mathbf{C}\)-vector space \(V\).   We say that \(L_o\) is {\it free} on \(X\) if, given a mapping \(\phi_o\) of \(X\) into a complex Lie algebra \(\mathfrak{g}_o\), there exists a unique homomorphism \(\psi_o\,:\,L_o\longrightarrow \mathfrak{g}_o\) extending \(\phi_o\), \cite{B, H}.   There is a unique free Lie algebra \(L_o(X)\) generated by \(X\).

\begin{definition}~~\\
   Let \(X\) be a finite set that is identified with the subset \(1\otimes X\,\subset\,\mathbf{H}\otimes X\).   We denote  \(JX=j\otimes X\subset \,\mathbf{H}\otimes X\).   The set \((X ,\,JX)\) may be supposed to be a subset of a \(\mathbf{H}\)-module.      
Let   \(L\) be a quarternion Lie algebra generated by the basis 
\(\{X,\,JX\}\).   \(L\) is said to be {\it free on} \(\,\{X,\,JX\}\) if, given a  quarternion Lie algebra \(\mathfrak{g}\) and a \(\mathbf{Z}_2\)-graded mapping \(\phi:\,X+JX\,\mapsto \mathfrak{g}\),  there exists a unique homomorphism of quarternion Lie algebras  \(\psi:L\,\longrightarrow \mathfrak{g}\) that extends the mapping  \(\phi\).
\end{definition}

\begin{proposition}\label{freeLie}~~ 
There exists a unique free quarternion Lie algebra \(L\) on \(\,\{X,JX\}\) .   
\end{proposition}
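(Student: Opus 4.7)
The plan is to mimic the standard construction of the free Lie algebra and endow it with the two commuting involutions $\sigma,\tau$ by transport via the universal property. Let $V_r$ denote the real span of $X$ and set
\[
W \;:=\; V_r + iV_r + JV_r + iJV_r \;\cong\; \mathbf{H}\otimes_{\mathbf{R}} V_r,
\]
regarded as an $\mathbf{H}$-module with the canonical $\sigma$ (identity on $V_o := V_r + iV_r$, negation on $JV_o$) and $\tau$ (identity on $V_r + JV_r$, negation on $iV_r + iJV_r$).

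First I would form the ordinary free real Lie algebra $L := \mathcal{L}_{\mathbf{R}}(W)$ on the underlying real vector space $W$. By the universal property of the free real Lie algebra, the $\mathbf{R}$-linear involutions $\sigma$ and $\tau$ extend uniquely to Lie algebra automorphisms of $L$, and the identities $\sigma^2 = \tau^2 = I$ and $\sigma\tau = \tau\sigma$, valid on the generating set $W$, propagate to all of $L$ by the uniqueness clause of that universal property. To check that $L$ is a quaternion Lie algebra in the sense of Definition \ref{ql}, I would realise $L$ as an $\mathbf{R}$-submodule of an ambient $\mathbf{H}$-module by extending the $\mathbf{Z}_2$-grading $W = V_o + JV_o$ to the tensor algebra $T(W)$ and defining a $J$-action on $T(W)$ compatible with $\sigma$ and $\tau$, so that the Lie subalgebra $L \subset T(W)$ inherits a genuine quaternion structure.

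For the universal property, given a quaternion Lie algebra $\mathfrak{g}$ and a $\mathbf{Z}_2$-graded map $\phi : X \cup JX \to \mathfrak{g}$ with $\phi(X) \subset \mathfrak{g}^+$ and $\phi(JX) \subset \mathfrak{g}^-$, I would extend $\phi$ $\mathbf{R}$-linearly to $\tilde\phi : W \to \mathfrak{g}$ by setting $\tilde\phi(ix) := \sqrt{-1}\,\phi(x)$ and $\tilde\phi(iJx) := \sqrt{-1}\,\phi(Jx)$, with the scalar multiplication taking place in the ambient $\mathbf{H}$-module of $\mathfrak{g}$; this $\tilde\phi$ is both $\sigma$- and $\tau$-equivariant by construction. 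The universal property of the free real Lie algebra then supplies a unique real Lie algebra homomorphism $\psi : L \to \mathfrak{g}$ extending $\tilde\phi$, and the equivariances $\psi\sigma = \sigma\psi$ and $\psi\tau = \tau\psi$ follow by the uniqueness clause applied to two real Lie algebra homomorphisms that agree on the generating set $W$. Uniqueness of $L$ up to isomorphism of quaternion Lie algebras is the standard categorical consequence of this universal property.

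The main obstacle will be the second step: exhibiting $L$ as an $\mathbf{R}$-submodule of a bona fide $\mathbf{H}$-module. The $\mathbf{R}$-linear involutions $\sigma$ and $\tau$ extend effortlessly through the free-Lie-algebra construction, but the $\mathbf{C}$-antilinear operator $J$ does not (in particular it does not respect a tensor product), so the ambient $\mathbf{H}$-module must be manufactured by hand --- for instance by declaring $J$ to interchange the two halves of a formal doubling of $T(W)^+$ built from the $\sigma$-grading, and verifying this is compatible with the bracket on $L$. Every other step is a routine application of the universal property of the free real Lie algebra.
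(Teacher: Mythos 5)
Your construction diverges from the paper's at the first step, and the divergence is fatal rather than cosmetic. The paper never forms a free real Lie algebra on a vector space: it realizes \(L\) inside the free associative \(\mathbf{C}\)-algebra \(A=A_o+JA_o\simeq\mathbf{H}\otimes_{\mathbf{C}}A_o\) on the generators \(\{X,JX\}\), with \(J\) acting by quaternion multiplication, and takes the bracket (\ref{qLiebra}) induced by the twisted associative product. In that realization the bracket automatically interacts with \(J\): reading off (\ref{qLiebra}), one has \([J\mathbf{u},J\mathbf{v}]=-[\mathbf{u},\mathbf{v}]\) and \([\mathbf{u},J\mathbf{v}]=J[\mathbf{u},\mathbf{v}]\) for \(\tau\)-real \(\mathbf{u},\mathbf{v}\), identities valid for all elements, not just generators. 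These identities are what force \(L=L_o+JL_o\) with \(L_o\) the free complex Lie algebra on \(X\), the shape Section 3 depends on. Your \(L=\mathcal{L}_{\mathbf{R}}(W)\) satisfies no identity whatsoever linking \([Jx,Jy]\) to \([x,y]\), so its \(\sigma\)-fixed part is vastly larger than \(L_o\), it is not the quaternification of \(L_o\), and it is not isomorphic to the paper's object. Your ad hoc doubling to manufacture an ambient \(\mathbf{H}\)-module can indeed be carried out, but the resulting \(J\) has no relation to the bracket, whereas in the paper the ambient \(\mathbf{H}\)-module is \(A\) itself and needs no manufacturing; your honest flag of this as the "main obstacle" is a symptom of having discarded the associative-algebra scaffolding that the paper's proof is built on.

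The step where your argument actually breaks is the universal property. Setting \(\tilde\phi(ix):=\sqrt{-1}\,\phi(x)\) "with the scalar multiplication taking place in the ambient \(\mathbf{H}\)-module of \(\mathfrak{g}\)" does not produce elements of \(\mathfrak{g}\): a quaternion Lie algebra is only an \(\mathbf{R}\)-submodule (a \(\sigma\)-submodule) of its ambient \(\mathbf{H}\)-module, not a \(\mathbf{C}\)-submodule. In the paper's own Example 2 of Section 2.5, \(\mathfrak{g}=\mathfrak{sl}(n,\mathbf{R})+J\mathfrak{gl}(n,\mathbf{R})\) contains no nonzero \(\sqrt{-1}\)-multiple of any of its elements; likewise for \(\mathfrak{g}=\mathfrak{sl}(n,\mathbf{H})\) the subalgebra \(\mathfrak{g}^+=\mathfrak{sk}(n,\mathbf{C})\) is not closed under \(\sqrt{-1}\) (e.g.\ \(\sqrt{-1}\cdot(\sqrt{-1}E_{11})=-E_{11}\) has nonzero real trace). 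So \(\tilde\phi\) is not a map into \(\mathfrak{g}\), the universal property of \(\mathcal{L}_{\mathbf{R}}(W)\) cannot be invoked, and no \(\psi\) is produced. The failure is structural: the definition of freeness prescribes images only for \(X\cup JX\), but by enlarging the generating set to all of \(W\) you force values on \(iX\) and \(iJX\) that \(\mathfrak{g}\) cannot accommodate; and even when it could, \(\sigma\)- and \(\tau\)-equivariance only confine \(\psi(ix)\) to an eigenspace, so uniqueness fails as well, since \(ix\) is a free generator in your \(L\). The paper sidesteps all of this by extending \(\phi\) not complex-linearly into \(\mathfrak{g}\) but canonically to an associative algebra homomorphism \(\Phi:A\longrightarrow\mathbf{H}\otimes_{\mathbf{C}}U(\mathfrak{g})\) (Example 7 of Section 2.5 gives the target its quaternion Lie algebra structure) and then restricting to \(L\longrightarrow\mathfrak{g}\), where the generators and their iterated brackets do remain in \(\mathfrak{g}\). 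To repair your proof you would have to abandon \(\mathcal{L}_{\mathbf{R}}(W)\) and rebuild \(L\) inside a quaternionic associative algebra, which is exactly the paper's route.
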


{\it Proof.}~~~
Let \(V_o\) be a vector space over \(\mathbf{C}\) having \(X\) as basis.   Then \(V=\mathbf{H}\otimes_{\mathbf{C}}V_o=V_o+JV_o\) is a  \(\mathbf{H}\)-module with the basis \(\{X,JX\}\).  
Let \(A_o\) be the associative \(\mathbf{C}\)-algebra on the generators \(\{X\}\) and let \(A\) be the associative \(\mathbf{C}\)-algebra on the generators \(\{X,JX\}\).    
The set of all monomials in these generators form a basis of \(A\).     Such a generator has the form \(x_{i_1}\cdots x_{i_m}+J(y_{j_1}\cdots y_{j_n})\) with \(x_{i_1},\cdots, x_{i_m}, y_{j_1},\cdots , y_{j_n}\in X\).     The monomial 
 \(
 Jv_1\cdot  Jv_2\cdots\cdot Jv_k\) belongs to \(A_o\) if \(k\) is even, and belongs to  \(JA_o\) if \(k\) is odd.    Hence \(A=A_o+JA_o\) is a \(\mathbf{H}\)-module.     We denote by the same letter \(A\) the Lie algebra obtained from \(A\) by redefining the multiplication in the usual way; (\ref{qLiebra}).    \(A=A_o+JA_o\) becomes a quarternion Lie algebra and \(A_o\) is a complex Lie algebra that is a real Lie subalgebra of \(A\).      Let  \( L\) be the Lie subalgebra of  \(A\) that is generated by \(\{X,\,JX\,\}\) ( as a real Lie algebra ).   
 The Lie subalgebra  \(L_o=L\cap A_0\) is generated by \(X\).    We have 
 \(L=L_o+JL_o\) and \(L\) is  a quarternification of \(L_o\).      \(L\) is a quarternion Lie algebra as well as \(\mathbf{H}\)-module generated by \(\{X,\,JX\}\).    Given a \(\mathbf{Z}_2\)-graded mapping  \(\phi\) from \(\{X,JX\}\) into a quarternion Lie algebra \(\mathfrak{g}\).    \(\phi\) is extended first to a \(\mathbf{Z}_2\)-graded linear map \(V\,\mapsto\,\mathfrak{g}\,\), then canonically to 
 an associative algebra homomorphism \(\Phi:A\longrightarrow\,U(\mathfrak{g})\), and \(\Phi\) induces a homomorphism of quarternion Lie algebras: \(\Phi:\,A\longrightarrow U(\mathfrak{g})\).     
 \(\Phi\) restricts to give a  homomorphism of quarternion Lie algebras  \(\psi:\,L\longrightarrow \mathfrak{g}\,\).     Since the image of generators \(\{X,JX\}\) in \(\mathfrak{g}\) defines uniquely the mapping we have the uniqueness of \(\psi\,\).     
 \hfil\qed

\begin{definition}\label{qfication}
If \(L\) is a free quarternion Lie algebra on \(X=\{x_i;\,i\in \Lambda\}\), and if \(R\) is the ideal of \(L\) generated by elements 
\(\{\,f_j;\, j\in \Lambda^{\prime} \}\), we call \(L/R\)  the {\it quotient quarternion Lie algebra with generators \(\{x_i\}_{i\in \Lambda}\) and relations } \(\{f_i=0;\,i\in\Lambda^{\prime}\}\), where \(x_i\) are the images in \(L/R\) of the elements of \(X\).
\end{definition}

\section
{quarternification of a simple Lie algebra}

\subsection{Theorem of Chevalley, Harish-Chandra and Serre}~~~
In this part we summarize the well known procedure of giving a simple Lie algebra from its root system and Cartan matrix.   
Let \(\mathfrak{g}_o\) be a simple Lie algebra with Cartan matrix \(A=\left( c_{ij}\right )\).  Let \(\mathfrak{h}_o\) be a Cartan subalgebra, \(\Phi\) the corresponding root system.    Let \(\Pi=\{\alpha_i;\,i=1,\cdots,l=\dim\,\mathfrak{h}_o\}\subset \mathfrak{h}_o^{\ast}\) be the set of simple roots and  \(\{\alpha_i^{\vee}\,;\,i=1,\cdots,l\,\}\subset \mathfrak{h}_o\) be the set of simple coroots.   The Cartan matrix \(A=(\,c_{ij}\,)_{i,j=1,\cdots,l}\) is given by \(c_{ij}=\left\langle \alpha_i^{\vee},\,\alpha_j \right\rangle\).       
 \(\Pi\) is also a base of the real part \(\mathfrak{h}_{r}\) of \(\mathfrak{h}_o\).     So \(\alpha(h)\) is a real number for \(\forall\alpha\in \Phi\)  and \(\forall h\in\mathfrak{h}_{r}\).     Let  \(\mathfrak{g}_o= \mathfrak{h}_o\oplus \sum_{\alpha\in \Phi}\,(\mathfrak{g}_o)_{\alpha}\) be the root space decomposition with the root space  
\(\,(\mathfrak{g}_o)_{\alpha}=\{\xi\in\mathfrak{g}_o;\,ad(h)\xi\,=\,\alpha(h)\xi, \quad\forall h\in \mathfrak{h}_o\}\).   Then  \(\dim_{\mathbf{C}}\,(\mathfrak{g}_o)_{\alpha}=1\).     Let \(\Phi_{\pm}\) be the set of positive ( respectively negative )  roots of \(\mathfrak{g}_o\) and put 
\[\mathfrak{e}_o=\sum_{\alpha \in \Phi_{+}}\,(\mathfrak{g}_o)_{\alpha}\,,\quad \mathfrak{f}_o=\sum_{\alpha \in \Phi_{-}}\,(\mathfrak{g}_o)_{\alpha}\,.\]
Then we have the triangular decomposition \(\mathfrak{g}_o=\mathfrak{h}_o \oplus  \mathfrak{e}_o \oplus \mathfrak{f}_o\).  
   Fix a standard set of generators \(\,h_i\in \mathfrak{h}_o\,, \,e_i\in (\mathfrak{g}_o)_{\alpha_i}\),  \(f_i\in (\mathfrak{g}_o)_{-\alpha_i}\).      
\(\mathfrak{g}_o\) is generated by 
\(X\,=\, \{e_i,\,f_i,\,h_i\,;\,i=1,\cdots,l\,\}\), and these generators satisfy the relations:
\begin{equation}\label{S1}
[\,h_i,\,h_j\,]\,=\,0\,, \quad   [\,e_i\,,\,f_j\,] \,=\,\delta_{ij}h_i\,,\quad
  [\,h_i\,,\,e_j\,]\,=\,c_{ji}e_j\,,\quad
[\,h_i\,,\,f_j\,]\,=\,-\,c_{ji}f_j\,.
\end{equation}
This is a presentation of \(\mathfrak{g}_o\) by generators and relations which depend only on the root system \(\Phi\).     

Conversely given a Cartan matrix \(A\) there is a simple Lie algebra that is associated to \(A\), \cite{H, C}.    Below we shall explain briefly how to construct the simple Lie algebra corresponding to \(A\).     Later we follow this method to have our quarternification of a given simple Lie algebra.   
Let  \(\,\Phi\)  be a root system with a fundamental system \(\,\Pi=\,\{\alpha_1,\cdots\,,\alpha_l\,\}\,\), and let \(\,c_{ij}=\,<\alpha_i\,,\alpha_j>\, \) be the associated Cartan integers.    Let \(L_o\) be the free Lie algebra on \(3l\) generators
\(\,X\,=\, \{\,e_i\,,\, f_i\,,\, h_i\,;\quad 1\leq i\leq l\,\}\,.\)   
Let \(I_o\) be the ideal in \(L_o\) generated by the elements:
\begin{equation}\label{io} [\, h_i\,,\,h_j\,]\,,\quad [\, e_i\,,\,f_j\,]\,-\delta_{ij}\, h_i\,,\quad  [\,h_i\,,\,e_j\,]\,-\,c_{ji}\,e_j\,,\quad  [\, h_i\,,\, f_j\,]\,+\,c_{ji}\, f_j\,.
\end{equation}
 
\begin{theorem}[ Tits, Chevalley and Harish-Chandra]\label{CHC}~~ \(M_o=L_o/I_o\) is the complex Lie algebra with generators  \(\{\,e_i,\,f_i,\,h_i\,;\,1\leq l\leq l\,\}\) and relations (\ref{S1}).   The elements \(h_i\,; 1\leq i\leq l\,\), form a basis of a \(l\)-dimensional abelian subalgebra \(H_o\) of \(M_o\) and 
\[ M_o=F_o+H_o+E_o\,,\quad\mbox{ direct sum , } \]
where \(F_o\) ( respectively  \(E_o\) ) is the subalgebra of \(M_o\) generated by the \(f_1,\,\cdots,\, f_ l\,\) ( respectively \(e_1,\,\cdots,\,e_l\,\) ).
\end{theorem}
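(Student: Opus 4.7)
The strategy is the classical one due to Tits, Chevalley and Harish-Chandra: realise $M_o$ concretely as an algebra of operators on a tensor algebra, then read off the triangular decomposition directly from the representation. Let $V$ be a complex vector space with basis $v_1,\dots,v_l$ and let $T=T(V)$ be its tensor algebra. The plan is to assign to each generator of $L_o$ an operator on $T$ and to verify that the relations (\ref{io}) hold in $\mathrm{End}_{\mathbf{C}}(T)$, so that the assignment descends to a representation $\rho:M_o\to\mathrm{End}_{\mathbf{C}}(T)$.

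Specifically, let $\rho(f_i)$ be left multiplication by $v_i$; let $\rho(h_i)$ act on a pure tensor $v_{j_1}\otimes\cdots\otimes v_{j_k}$ by the scalar $-\sum_{s=1}^{k}c_{j_s,i}$; and define $\rho(e_i)$ by induction on tensor degree via $\rho(e_i)(1)=0$ and
\[
\rho(e_i)(v_j\otimes t)\,=\,v_j\otimes\rho(e_i)(t)\,+\,\delta_{ij}\,\rho(h_i)(t).
\]
From this representation three consequences follow. First, the operators $\rho(h_i)$ act diagonally on $T$ with weights whose linear independence follows from that of the columns of the Cartan matrix, so the $h_i$ are themselves linearly independent in $M_o$ and $H_o=\bigoplus_i\mathbf{C}h_i$ is an abelian subalgebra of dimension $l$. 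Second, since $v_1,\dots,v_l$ generate $T$ freely and $\rho(f_i)$ is left multiplication by $v_i$, the commutator-Lie subalgebra of $T$ generated by the $\rho(f_i)$ is the free Lie algebra on $l$ generators; hence the surjection from the free Lie algebra on $\{f_1,\dots,f_l\}$ onto $F_o\subseteq M_o$ is injective, so $F_o$ is free on $\{f_i\}$. A mirror construction exchanging the roles of $e_i$ and $f_i$ yields the analogous statement for $E_o$. Third, the relations (\ref{io}) alone permit a commutator-sorting argument: every element of $M_o$ is a sum of terms of the form $(\text{product of }f_i\text{'s})\cdot h\cdot(\text{product of }e_j\text{'s})$ with $h\in H_o$, obtained by pushing $f$'s to the left and $e$'s to the right while absorbing each $[e_i,f_i]=h_i$ into $H_o$. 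Combined with the weight-space separation provided by $\rho$, this yields the direct sum $M_o=F_o\oplus H_o\oplus E_o$.

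The main obstacle is the well-definedness and consistency of the inductively defined operator $\rho(e_i)$ and the verification that the mixed commutation identities $[\rho(e_i),\rho(f_j)]=\delta_{ij}\rho(h_i)$ and $[\rho(h_j),\rho(e_i)]=c_{ij}\rho(e_i)$ hold on all tensor degrees. Both reduce to an induction on tensor degree in which the inductive step produces exactly the required Cartan integer from the chosen weight formula for $\rho(h_j)$. Once these verifications are complete, linear independence of the $h_i$, freeness of $E_o$ and $F_o$, and the direct sum decomposition all follow with no further substantial work.
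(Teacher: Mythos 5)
Your proposal is correct and takes essentially the same approach as the paper: the paper likewise proves the theorem by constructing a representation of \(M_o\) on the free associative algebra \(L_o^-\) generated by \(f_1,\dots,f_l\) (your tensor algebra \(T(V)\) with \(v_i\leftrightarrow f_i\)), and the closed-form action (\ref{M0action}) it writes down is exactly what your inductive definition of \(\rho(e_i)\) unwinds to. Your remaining steps --- linear independence of the \(h_i\) from the nonsingularity of the Cartan matrix, freeness of \(F_o\) via the left-multiplication realization, and spanning plus weight-space separation for the direct sum \(M_o=F_o\oplus H_o\oplus E_o\) --- coincide with the standard argument the paper invokes and delegates to Carter (p.~99) and Humphreys (p.~97).
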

The proof of this theorem is given by constructing a suitable representation of
 \(M_o\), \cite{C, H, S}.      Let \(L_o^-\) be the free associative \(\mathbf{C}\)-algebra with generators \(\{\,f_1,f_2.\cdots,f_l\,\}\).    
Then \(L_o^-\) may be made into a \(M_o\)-module giving a representation 
\(\rho_o:\,M_o\longrightarrow\,End(\,L_o^-)\,\)defined by:
 \begin{eqnarray}\label{M0action}
 \rho_o(f_j)\, f_{i_1}\cdots f_{i_t}\,&=& \,f_jf_{i_1}\cdots f_{i_t} \nonumber\\[0.2cm]
 \rho_o(h_j)\,f_{i_1}\cdots f_{i_t}&=& -(c_{i_1j}+\cdots+c_{i_tj})
f_{i_1}\cdots f_{i_t}\nonumber \\[0.2cm]
\rho_o(e_j)\, f_{i_1}\cdots f_{i_t}&=& 
-\,\sum_{k=1}^t\delta_{ji_k}\left (\sum_{h=k+1}^t c_{ i_hj}\right)f_{i_1}\cdots \Check{f_{i_k}}\cdots f_{i_t}
\end{eqnarray}
where \(\Check{f_{i_k}}\) means that
 \(f_{i_k}\) is omitted from the product.    There is a unique extension to \(M_o\) of this action and we have a representation  \(\rho_o:\,M_o\longrightarrow \,End(L^-_o)\).    We find that the ideal \(I_o\) is in the kernel of the representation \(M_o\longrightarrow  \,End(L^-_o)\).   A detailed proof is at p.99 of \cite{C} or at p.97 of \cite{H}.
 
Put 
 \begin{equation}\label{adideal}
 e_{ij}=(ad\,e_i)^{-c_{ji}+1}(\,e_j\,)\,,\quad \,f_{ij}=(ad\,f_i)^{-c_{ji}+1}(\,f_j\,) \quad\mbox { for } i\neq j\,.
 \end{equation}
 Let \( \tilde I_o\) be the ideal of \(M_o\) generated by all \(e_{ij}\,,\,f_{ij}\,,\,1\leq i\neq j\leq\,l\,\).
 
 \begin{theorem}[Serre]\label{Serre}~~~ Let \(\Phi\) be a root system and let \(\mathfrak{g}_o=M_o/\tilde I_o\) be the Lie algebra generated by \(\{\,e_i,\,f_i,\,h_i\,;\,1\leq i \leq l\,\}\) that are subject to the relations (\ref{S1}) and the relations  
  \begin{equation}\label{S2}
  e_{ij}=f_{ij}=0\,;\,1\leq i\neq j\leq\,l\,.
  \end{equation} Then  \(\mathfrak{g}_o\) is a finite dimensional simple Lie algebra with the Cartan subalgebra spanned by  \(\{h_i; 1\leq i\leq l\}\), and with the corresponding root system \(\Phi\).
\end{theorem}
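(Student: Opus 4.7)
The plan is to follow the classical Serre strategy building on Theorem \ref{CHC}. Starting from the algebra \(M_o = L_o/I_o\) with its triangular decomposition \(M_o = F_o \oplus H_o \oplus E_o\), the first step is to observe that the generators \(e_{ij}\) and \(f_{ij}\) of the ideal \(\tilde{I}_o\) are weight vectors for the adjoint action of \(H_o\), with weights \((1-c_{ji})\alpha_i + \alpha_j\) and \(-((1-c_{ji})\alpha_i + \alpha_j)\) respectively. In particular \(e_{ij}\in E_o\) and \(f_{ij}\in F_o\), so the two families of Serre relations live in complementary pieces of the triangular decomposition.

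The critical lemma --- and the main obstacle --- is to establish that \([e_k, f_{ij}] = 0\) and, symmetrically, \([f_k, e_{ij}] = 0\) for every \(k\). For \(k \neq i, j\) this is immediate from the defining relations (\ref{S1}); the interesting cases are \(k = i\) and \(k = j\). They are handled by induction on \(m\) using the \(\mathfrak{sl}_2\)-triple \((e_i, h_i, f_i)\): one shows that the vectors \((ad\,f_i)^m(f_j)\) for \(0 \leq m \leq 1-c_{ji}\) span a finite-dimensional irreducible \(\mathfrak{sl}_2\)-module of highest weight \(-c_{ji}\), whose lowest weight vector \((ad\,f_i)^{1-c_{ji}}(f_j) = f_{ij}\) is then annihilated by \(ad\,e_i\). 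The computation rests only on the relations already present in \(M_o\) and on the Jacobi identity, so it remains valid before passing to the quotient.

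From these vanishings the ideal \(\tilde{I}_o\) splits as \(\tilde{I}_o = I^+ \oplus I^-\), where \(I^+ \subset E_o\) is the ideal generated by the \(e_{ij}\) and \(I^- \subset F_o\) by the \(f_{ij}\); in particular \(\tilde{I}_o \cap H_o = 0\). Hence \(\mathfrak{g}_o = M_o/\tilde{I}_o\) inherits the triangular decomposition \(\mathfrak{g}_o = F \oplus H_o \oplus E\) with \(F = F_o/I^-\) and \(E = E_o/I^+\), and \(H_o\) continues to act semisimply with weights in the root lattice \(\mathbf{Z}\Pi\).

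Finite-dimensionality and the identification of the set of weights with \(\Phi \cup \{0\}\) is then obtained by exploiting the Weyl group. Since \(ad\,e_i\) and \(ad\,f_i\) act locally nilpotently on \(\mathfrak{g}_o\) (a consequence of the Serre relations), the automorphisms \(\exp(ad\,e_i)\exp(-ad\,f_i)\exp(ad\,e_i)\) are well defined and realize the simple reflections \(s_i\). This forces the multiset of weights to be invariant under the Weyl group of \(\Phi\), which together with one-dimensionality of the simple root spaces confines weights to \(\Phi \cup \{0\}\); hence \(\dim \mathfrak{g}_o = l + |\Phi|\) is finite. Simplicity finally follows because any nonzero ideal is a sum of weight spaces, must contain some \(h_i\) by bracketing a pair of opposite root vectors in it, and the connectedness of the Dynkin diagram of the simple root system \(\Phi\) propagates this to every generator, showing the ideal is all of \(\mathfrak{g}_o\).
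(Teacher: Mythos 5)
Your proposal is correct and follows essentially the same route as the paper, which gives no independent proof of this theorem but defers to the classical Serre--Humphreys argument (p.~99 of \cite{H}), resting exactly on the ingredients you use: the splitting \(\tilde I_o = I^+\oplus I^-\) inside \(E_o\oplus F_o\), local nilpotency of \(ad\,e_i\) and \(ad\,f_i\), and the finiteness of \(\Phi\) and its Weyl group. One cosmetic slip worth fixing: in \(M_o\) the span of the \((ad\,f_i)^m(f_j)\) is a highest-weight (Verma-type) \(\mathfrak{sl}_2\)-module, not a finite-dimensional irreducible one, and \(f_{ij}=(ad\,f_i)^{1-c_{ji}}(f_j)\) is its singular vector rather than a lowest weight vector --- the inductive identity \([e_i,(ad\,f_i)^m(f_j)]=m(-c_{ji}-m+1)(ad\,f_i)^{m-1}(f_j)\), which you correctly invoke, is what actually yields \([e_i,f_{ij}]=0\).
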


The proof is found at p.99 of \cite{H}.     It is based on the fact that the maps \(ad\,e_i\,:\,\mathfrak{g}_o\longrightarrow \mathfrak{g}_o\) and \(ad\,f_i\,:\,\mathfrak{g}_o\longrightarrow \mathfrak{g}_o\) are locally nilpotent and also on the facts that the root system \(\Phi\) is finite as well as the associated Weyl group that acts on \(\Phi\).

\subsection{quarternification of a simple Lie algebra }

  Let \(L_o\) be as in the preceding paragraph a free Lie algebra over \(\mathbf{C}\) on \(3l\) generators 
\(X=\{\,e_i\,,\,f_i\,,\,h_i\,;\quad 1\leq i\leq l\,\}\), 
and let \(I_o\) be the ideal of \(L_o\) generated by
\[ [\,h_i\,,\,h_j\,]\,,\quad [\,e_i\,,\,f_j\,]\,-\delta_{ij}\,h_i\,,\quad  [\,h_i\,,\,e_j\,]\,-\,c_{ji}\,e_j\,,\quad  [\,h_i\,,\,f_j\,]\,+\,c_{ji}\,f_j\,, \qquad (\ref{io})\]
where  \(c_{ij}\)'s are real numbers.   Let \(I_r\) be the real form of \(I_o\): \(I_o=I_r+\sqrt{-1}I_r\).
 
From Proposition \ref{freeLie} there is a free quarternion Lie algebra \(L\,\)  that is generated over \(\mathbf{C}\) by the following elements
\begin{equation}\label{freebase}
X+JX= \left\{\,e_i\,,\,f_i\,,\,h_i\,,\,  Je_i\,,\,Jf_i\,,\,Jh_i\,;\quad 1\leq i\leq l\,\right\}\,.
 \end{equation}
Then \(L=L_o+JL_o\) is a quarternion Lie algebra as well as a \(\mathbf{H}\)-module, and becomes the quarternification of the complex Lie algebra \(L_o\,\).   More precisely  
\[L=L_r+\sqrt{-1}L_r+JL_r+J(\sqrt{-1}L_r)\,,\]
where \(L_r\) is the real form of \(L_o\,\), see Remark \ref{qmremark}.   
 
Let \( I^{\ast}\) be the ideal of the real Lie algebra \(L\) 
 generated by the following  elements:
\begin{eqnarray}\label{I}
 \,  [\, h_i\,,\,Jh_j\,]\,,& \quad
  [\,J h_i\,,\,Jh_j\,]\,,
  \,
 \\[0.2cm]
[\,Je_i\,,\,f_j\,]\,-\delta_{ij}\,Jh_i\,, &\,
[\,e_i\,,\,Jf_j\,]\,-\delta_{ij}\,Jh_i\,,\,\quad
[\,Je_i\,,\,Jf_j\,]\,+\delta_{ij}\,h_i\,,\nonumber
\\[0.2cm]
 [\,h_i\,,\,Je_j\,]\,-\, c_{ji}\,Je_j\,,&\,
 [\,Jh_i\,,\,e_j\,]\,-\, c_{ji}\,Je_j\,,\,\quad
 [\,Jh_i\,,\,Je_j\,]\,+\,c_{ji}\,e_j\,,\nonumber
 \\[0.2cm]
  [\,h_i\,,\,Jf_j\,]\,+\, c_{ji}\,Jf_j\,,&\,
 [\,Jh_i\,,\,f_j\,]\,+\, c_{ji}\,Jf_j\,,\,\quad
 [\,Jh_i\,,\,Jf_j\,]\,-\,c_{ji}\,f_j\,.\nonumber
 \end{eqnarray}  
Let \(I=I_o+I^{\ast}\) be the sum of ideals \(I_o\) and \(I^{\ast}\).    \(I\,\) is generated by the elements (\ref{io}) and (\ref{I}).   
 Then we have  
 \begin{equation}
 I \,\subset L_o+JL_r\,,\quad \, I\cap\,  L_o\,=\,I_o\,.
 \end{equation}
 We note that the elements like \([\sqrt{-1}x_i, Jy_i]\)  and \([\sqrt{-1}Jx_i, Jy_i]\) for \(x_i, y_i=h_i,e_i,f_i\,\), are not necessarily in \(I\).   
 
 Let 
\(M=L/ I\) be the quotient algebra of \( L\) by 
\( I\).    Then \(M\) is the quarternification of \(M_o\).

We shall construct a suitable representation ( over \(\mathbf{R}\) ) of \(M\) so that we can study the Lie algebra \(M\) concretely.   The images of the generators (\ref{freebase}) of \(L\) will be written by the same notation:   
\( \{\,e_i\,,\, f_i\,,\,h_i\,,\,  J e_i\,,\,J f_i\,,\,J h_i\,,\quad 1\leq i\leq l\,\}\).   

Let \(L^-\) be the free associative quarternion Lie algebra generated by the basis \(\,\{f_i,\,Jf_i\,;\,\,i=1,2,\cdots,l\,\}\).
\begin{proposition}~~~
Let \(L_o\) be as before the free Lie algebra generated by  \(\{\,e_i\,,\,f_i\,,\,h_i\}\) and  let \(\rho_o:\,M_o\longrightarrow End(L_o^-)\) be the representation given by the actions (\ref{M0action}).    Then \(\rho_o:\,M_o\longrightarrow End(L_o^-)\) is extended to a representation \(\rho\,:\,M\longrightarrow End(L^-)\) that is given by the following formulae:
\begin{eqnarray}
\rho(h_j)( Jf_{i_1}\cdots f_{i_t})&=\rho(J h_j)\,(f_{i_1}\cdots f_{i_t})&= -({c_{i_1j}+\cdots+c_{i_tj}})
\,Jf_{i_1}\cdots f_{i_t}\label{Maction} \\[0.3cm]
\rho(f_j) (Jf_{i_1}\cdots f_{i_t})&= \rho(Jf_j)\,(f_{i_1}\cdots f_{i_t})&= 
Jf_jf_{i_1}\cdots f_{i_t}\nonumber\\[0.3cm]
\rho(e_j) (Jf_{i_1}\cdots f_{i_t})&=\rho(Je_j)\,(f_{i_1}\cdots f_{i_t})&=
-\sum_{k=1}^t\delta_{ji_k}\left (\sum_{h=k+1}^t c_{ i_hj}\right) Jf_{i_1}\cdots \Check{f_{i_k}}\cdots f_{i_t}\nonumber
\end{eqnarray}
\begin{eqnarray}
\rho(Jh_j)\, (Jf_{i_1}\cdots f_{i_t}))&=& (c_{i_1j}+\cdots+c_{i_tj})
\,f_{i_1}\cdots f_{i_t}\label{Maction2} 
\\[0.3cm]
\rho(Jf_j)\, (Jf_{i_1}\cdots f_{i_t})&=& 
-f_jf_{i_1}\cdots f_{i_t}\nonumber
\\[0.3cm]
\rho( Je_j)\, (Jf_{i_1}\cdots f_{i_t})&=& \,\sum_{k=1}^t\delta_{ji_k}\left (\sum_{h=k+1}^t c_{ i_hj}\right)f_{i_1}\cdots \Check{f_{i_k}}\cdots f_{i_t}\nonumber
\end{eqnarray}
\end{proposition}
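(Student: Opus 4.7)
\quad The approach is to first lift the prescription on generators to a homomorphism $\tilde\rho:L\to\mathrm{End}_{\mathbf{R}}(L^-)$ via the universal property of the free quaternion Lie algebra, and then show that the ideal $I=I_o+I^{\ast}$ lies in the kernel, so that the induced map $\rho:M=L/I\to\mathrm{End}_{\mathbf{R}}(L^-)$ is the desired representation.

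\textbf{Step 1 (Target as a quaternion Lie algebra).}\quad Because $L^-$ was constructed as the free associative algebra on the $\mathbf{H}$-module with basis $\{f_i,Jf_i\}$, Example~6 of \S2.5 applies and we have $L^-=L_o^-+JL_o^-$ as an $\mathbf{H}$-module. Consequently $\mathrm{End}_{\mathbf{R}}(L^-)$ inherits the $\mathbf{Z}_2$-gradation (\ref{2-grad}) and hence a quaternion Lie algebra structure via the bracket (\ref{qLiebra}).

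\textbf{Step 2 (Universal lift).}\quad Let $\mathcal{J}\in\mathrm{End}_{\mathbf{R}}(L^-)$ denote left multiplication by the quaternion $j$, so $\mathcal{J}^2=-\mathrm{Id}$ and $\mathcal{J}$ is $\mathbf{C}$-antilinear. Inspection of (\ref{Maction}) shows that $\rho(h_i),\rho(e_i),\rho(f_i)$ each preserve the splitting $L^-=L_o^-+JL_o^-$ and commute with $\mathcal{J}$, so they sit in the $+1$ eigenspace of $\sigma$; from (\ref{Maction2}) one reads off the compact identity $\rho(Jx)=\mathcal{J}\circ\rho(x)$ for $x\in\{h_i,e_i,f_i\}$, whence these operators swap $L_o^-\leftrightarrow JL_o^-$ and sit in the $-1$ eigenspace of $\sigma$. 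The prescribed map $X\cup JX\to\mathrm{End}_{\mathbf{R}}(L^-)$ is therefore $\mathbf{Z}_2$-graded, and Proposition~\ref{freeLie} produces a unique quaternion Lie algebra homomorphism $\tilde\rho:L\to\mathrm{End}_{\mathbf{R}}(L^-)$ extending it.

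\textbf{Step 3 (Descent to $M$).}\quad It remains to verify $\tilde\rho(I_o)=0$ and $\tilde\rho(I^{\ast})=0$. The first is essentially Theorem~\ref{CHC}: the restriction of $\tilde\rho$ to the complex subalgebra $L_o\subset L$ acts on the invariant subspace $L_o^-$ exactly by the classical formulas (\ref{M0action}), so $\rho_o(I_o)=0$ from \cite{C,H,S} applies, and the parallel calculation on $JL_o^-$ is formally identical because of the identity $\rho(x)\circ\mathcal{J}=\mathcal{J}\circ\rho(x)$ for $x\in X$. For $I^{\ast}$, the identity $\rho(Jx)=\mathcal{J}\circ\rho(x)$ from Step~2, combined with $\mathcal{J}^2=-\mathrm{Id}$ and the definition (\ref{qLiebra}) of the bracket on $\mathrm{End}_{\mathbf{R}}(L^-)$, converts each of the eleven families of generators of $I^{\ast}$ into a relation already known to hold for $\rho_o$; the three flavours (mixed Cartan, one-$J$ Chevalley and two-$J$ Chevalley relations) reduce respectively to $[\rho(h_i),\rho(h_j)]=0$, $[\rho(e_i),\rho(f_j)]=\delta_{ij}\rho(h_i)$ and its Cartan-weight analogues.

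\textbf{Main obstacle.}\quad The substantive work lies in Step~3: one must track signs carefully when verifying that the eleven families in (\ref{I}) really do collapse under $\rho(Jx)=\mathcal{J}\circ\rho(x)$. The bracket (\ref{qLiebra}) applies complex conjugation to the first factor of each term, while $\mathcal{J}$ itself is $\mathbf{C}$-antilinear, so the combinatorics of conjugations is nontrivial; confirming that the precise coefficients and signs in (\ref{Maction2}) were chosen so that all these conjugations cancel in exactly the right way is the only real calculation in the proof.
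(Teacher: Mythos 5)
Your proposal is correct, and its skeleton (lift the prescription to the free quaternion Lie algebra \(L\) by the universal property of Proposition \ref{freeLie}, then show \(I=I_o+I^{\ast}\subset\ker\rho\) so that \(\rho\) descends to \(M=L/I\)) is the same as the paper's. Where you genuinely diverge is in how the kernel check is executed. The paper verifies \(I^{\ast}\subset\ker\rho\) by brute force on the monomial basis of \(L^-\): it observes that \(\rho(h_j)\) and \(\rho(Jh_j)\) act diagonally (disposing of the Cartan-type generators of (\ref{I})), then computes the commutators \([\rho(Je_i),\rho(f_j)]\), \([\rho(Jh_i),\rho(f_j)]\), \([\rho(Jh_i),\rho(e_j)]\) term by term against (\ref{Maction})--(\ref{Maction2}), and dismisses the remaining families with ``similarly.'' You instead extract the single operator identity \(\rho(Jx)=\mathcal{J}\circ\rho(x)\) together with \([\mathcal{J},\rho(x)]=0\) for \(x\in\{h_i,e_i,f_i\}\) (both are indeed immediate from (\ref{M0action})--(\ref{Maction2}) because all structure constants \(c_{ij},\delta_{ij}\) are real), and then every one of the eleven families in (\ref{I}) collapses in one line via \(\mathcal{J}^2=-\mathrm{Id}\) to a classical relation already covered by Theorem \ref{CHC}, e.g. \([\mathcal{J}\rho(e_i),\mathcal{J}\rho(f_j)]=\mathcal{J}^2[\rho(e_i),\rho(f_j)]=-\delta_{ij}\rho(h_i)\) and \([\mathcal{J}\rho(h_i),\rho(e_j)]=\mathcal{J}[\rho(h_i),\rho(e_j)]=c_{ji}\rho(Je_j)\); the same commutation also gives your observation that the \(I_o\)-relations, known on \(L_o^-\), propagate to \(JL_o^-\). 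This buys uniformity and immunity to sign errors, and your Step 1 makes explicit the quaternion Lie algebra structure on the target \(\mathrm{End}_{\mathbf{R}}(L^-)\), which the paper leaves implicit; what the paper's computational route buys is independence from any identification of the bracket on operators with (\ref{qLiebra}) — it only ever uses the plain commutator of \(\mathbf{R}\)-linear endomorphisms on monomials, which incidentally means the conjugation bookkeeping you flag as the ``main obstacle'' is moot: since the \(c_{ij}\) are real, the honest commutator is all that is needed and no cancellation of conjugations has to be arranged.
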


{\it Proof}

Since \(f_1,\cdots,f_l,\,Jf_1,\cdots,Jf_l\,\) form a basis of \(L^-\,\) the above formulae determine uniquely the endomorphisms \(\,\rho(h_i), \rho(f_i),\rho( e_i),\, \rho(Jh_i), \rho(Jf_i)\) and  \(\rho(Je_i)\,\).   
Thus there is a unique homomorphism \(L\longrightarrow End(L^-)\) mapping \(\,X+JX\in L\,\) 
 to \(\,\rho(X+JX)\, \).   This induces a quarternion Lie algebra homomorphism \(L\longrightarrow End(L^-)\).   In order to obtain a homomorphism \(M\longrightarrow End(L^-)\) we must verify 
\( I\,\subset\, Ker\,\,\rho\,\).   
That is, \(\rho\) factors through \(M\), thereby making \(\,L^-\) an \(M\)-module.   The fact that \([h_i,h_j]\), \([e_i,\,f_j]-\delta_{ij}h_i\),  \([h_i,\,e_j]-c_{ji}e_j\) and \([h_i,f_j]+c_{ji}f_j\)  belong to the kernel of \(\rho\) has been mentioned in Theorem \ref{CHC} and the proof is found in Proposition 18.2 of \cite{H}.   We arrange the basis of \(L^-\) in such a way that \(\{\cdots, \,(f_{i_1}\cdots f_{i_s})\,,\,(J f_{i_1}\cdots f_{i_s})\,,\cdots, \,(f_{k_1}\cdots f_{k_t})\,,\,(J f_{k_1}\cdots f_{k_t})\,,\,\cdots,\,\cdots\}\) are in lexicographic order.   Then we see from (\ref{Maction}), (\ref{Maction2}) that \(h_j\) and \(Jh_j\) act diagonally multiplying each basis element of \(L^-\) by scalars, so that each pair of \(\,\rho(h_i)\) and \(\rho(Jh_j)\) commute, hence 
\([h_i,Jh_j]\), \([Jh_i,h_j]\) and \([Jh_i,Jh_j]\) belong to the kernel of \(\rho\).   

We have 
\begin{eqnarray*}
\rho(Je_i)\rho(f_j)\,f_{i_1}\cdots f_{i_t}\,&=&\,- \left (\sum_{h=1}^t c_{ i_hi}\right)Jf_{i_1}\cdots \cdots f_{i_t}\\[0.2cm]
&\,&-\sum_{k=1}^t\delta_{ii_k}\left (\sum_{h=k+1}^t c_{i_hi}\right)Jf_jf_{i_1}\cdots \Check{f_{i_k}}\cdots f_{i_t}
\end{eqnarray*}
\[\rho(f_j)\rho(Je_i)\,f_{i_1}\cdots f_{i_t}\,=\,-\sum_{k=1}^t\delta_{ii_k}\left (\sum_{h=k+1}^t c_{ i_hi}\right)Jf_jf_{i_1}\cdots \Check{f_{i_k}}\cdots f_{i_t}\]
Hence 
\[ (\rho(Je_i)\rho(f_i)-\rho(f_i)\rho(Je_i))f_{i_1}\cdots f_{i_t}\,=\,  -\left (\sum_{h=1}^t c_{ i_hi}\right)Jf_{i_1}\cdots \cdots f_{i_t}=\rho(Jh_i)f_{i_1}\cdots f_{i_t},\]
so that \([Je_i\,, \,f_i\,]-Jh_i\in Ker\,\rho\).  Also  \([Je_i\,, \,f_j\,]\,\in Ker\,\rho\) for \(i\neq j\).
Next 
\begin{eqnarray*}
&&(\rho(Jh_i)\,\rho(f_j)-\rho(f_j)\rho(Jh_i)) f_{i_1}\cdots f_{i_t}=\rho(Jh_i)(f_jf_{i_1}\cdots f_{i_t}) +\rho(f_j)(c_{i_1i}+\cdots+c_{i_ti})\,Jf_{i_1}\cdots f_{i_t}\,\\[0.2cm]
&&=
\,\left( -(c_{ji}+c_{i_1i}+\cdots+c_{i_ti})+(c_{i_1i}+\cdots+c_{i_ti})\right)
\,Jf_jf_{i_1}\cdots f_{i_t}\,=\,
-\,c_{ji} \,Jf_j\cdot f_{i_1}\cdots f_{i_t}\,.
\end{eqnarray*}
Therefore
\(
[\,Jh_i\,,\,f_j\,]\,+\, c_{ji} Jf_j\,\in Ker \,\rho\,
\).

To prove  
\([\,Jh_i\,,\,e_j\,]\,-\, c_{ji} Je_j\,\in Ker \,\rho\,\)  we have
\[\rho(Jh_i)\rho(e_j)f_{i_1}\cdots f_{i_t}=-\sum_{k=1}^t\delta_{ji_k}\left(\sum_{h=k+1}^tc_{i_hj}\right)\left(-\sum_{g\neq k}c_{i_gi}\right)\,Jf_{i_1}\cdots\Check{f_{i_k}}\cdots f_{i_t}\,.\]
\[\rho(e_j)\rho(Jh_i)f_{i_1}\cdots f_{i_t}=-\sum_{k=1}^t\delta_{ji_k}\left(\sum_{h=k+1}^tc_{i_hj}\right)\left(-\sum_{g=1}^tc_{i_gi}\right)\,Jf_{i_1}\cdots\Check{f_{i_k}}\cdots f_{i_t}\,.\]
Thus 
\begin{eqnarray*}
(\,\rho(Jh_i)\rho(e_j)-\rho(e_j)\rho(Jh_i)\,)f_{i_1}\cdots f_{i_t}\,&=&\,-c_{ji}\sum_{k=1}^t\delta_{ji_k}\left(\sum_{h=k+1}^tc_{i_hj}\right)\,Jf_{i_1}\cdots\Check{f_{i_k}}\cdots f_{i_t}\,\\[0.2cm]
&=&c_{ji}\rho(Je_j)\,f_{i_1}\cdots f_{i_t}\,.
\end{eqnarray*}
Similarly other elements of \(I\) are shown to be in the kernel of \(\,\rho\).     Thus we have a homomorphism \(M\longrightarrow\,End(\,L^-\,)\).   
\hfil\qed

We can deduce useful information about \(M\) from the existence of this homomorphism.

\begin{proposition}\label{h}
The elements \(h_1,\,\cdots\,,\,h_l\,\) and \(Jh_1,\,\cdots\,,\,Jh_l\,\) of \(M\) 
are linearly independent over \(\mathbf{C}\).
\end{proposition}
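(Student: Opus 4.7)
The plan is to transport a hypothetical complex linear dependence through the representation $\rho:M\longrightarrow End(L^-)$ constructed in the preceding proposition, and then exploit the invertibility of the Cartan matrix. Suppose
\[
\sum_{i=1}^l\bigl(a_i\,h_i + b_i\,Jh_i\bigr)\,=\,0 \qquad\text{in }M,
\]
with $a_i,b_i\in\mathbf{C}$. Since $\sigma$ is $\mathbf{C}$-linear and $h_i\in M^+$, $Jh_i\in M^-$, with $M=M^+\oplus M^-$ a $\mathbf{C}$-module direct sum (as $M^{\pm}$ are the $\pm 1$ eigenspaces of the $\mathbf{C}$-linear $\sigma$), the relation decouples into the two separate identities $\sum_i a_i h_i=0$ and $\sum_i b_i Jh_i=0$.

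Next I would apply $\rho$ to each identity and evaluate on the degree one basis vector $f_k\in L^-$. From formulas (\ref{Maction}) and (\ref{Maction2}) with $t=1$, $i_1=k$,
\[
\rho(h_j)\,f_k\,=\,-c_{kj}\,f_k, \qquad \rho(Jh_j)\,f_k\,=\,-c_{kj}\,Jf_k.
\]
Since $f_k$ and $Jf_k$ are distinct basis elements of $L^-$, the two vanishings
$\rho\bigl(\sum_i a_i h_i\bigr)f_k=0$ and $\rho\bigl(\sum_i b_i Jh_i\bigr)f_k=0$ become, respectively,
\[
\sum_{i=1}^l a_i\,c_{ki}\,=\,0 \quad\text{and}\quad \sum_{i=1}^l b_i\,c_{ki}\,=\,0, \qquad k=1,\dots,l.
\]
The Cartan matrix $(c_{ki})$ of a simple Lie algebra is nonsingular, so both systems force $a_i=b_i=0$ for all $i$, establishing complex linear independence.

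The only delicate point is verifying that $\rho(a\,h_j)=a\,\rho(h_j)$ and $\rho(a\,Jh_j)=a\,\rho(Jh_j)$ for $a\in\mathbf{C}$, so that complex scalars can be pulled through $\rho$. This follows from the fact that $\rho$ is an $\mathbf{R}$-linear quaternion Lie algebra homomorphism together with the $\mathbf{C}$-module structure on $L$ inherited from the ambient $\mathbf{H}$-module: concretely, splitting $a_i=\alpha_i+\sqrt{-1}\beta_i$ into real and imaginary parts reduces the question to checking the action of $\sqrt{-1}h_j$ and $\sqrt{-1}Jh_j$ on $f_k$, which from (\ref{Maction})--(\ref{Maction2}) produces scalar multiples of $\sqrt{-1}f_k$ and $\sqrt{-1}Jf_k$ that remain linearly independent from $f_k$ and $Jf_k$ over $\mathbf{R}$. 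This is the only step that requires care; the remainder is bookkeeping and the standard nonsingularity of the Cartan matrix.
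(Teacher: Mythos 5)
Your proof is correct and is essentially the paper's own argument: both push a putative dependence through the representation \(\rho:M\to End(L^-)\), evaluate on the degree-one vectors \(f_k\) (where \(\rho(h_j)f_k=-c_{kj}f_k\) and \(\rho(Jh_j)f_k=-c_{kj}Jf_k\)), and conclude from the nonsingularity of the Cartan matrix; your explicit \(\sigma\)-decoupling and your flagging of the scalar-pulling issue are refinements of the same route, not a different one. One caveat on wording: \(M=M^+ \oplus M^-\) is only a real \(\sigma\)-eigenspace decomposition, not a \(\mathbf{C}\)-module direct sum --- the paper stresses that \(I\) is merely a real ideal (elements like \([\sqrt{-1}x_i,Jy_i]\) need not lie in \(I\)), so \(M\) need not carry a complex structure --- but your decoupling survives because \(h_i,\sqrt{-1}h_i\) map into \(M^+\) and \(Jh_i,\sqrt{-1}Jh_i\) into \(M^-\), and in any case evaluating the combined relation at \(f_k\) separates the \(f_k\)- and \(Jf_k\)-components directly, which incidentally justifies the union-independence step more carefully than the paper's own two-families-separately phrasing.
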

We shall show that both  \(\rho(h_1),\,\cdots\,,\,\rho(h_l)\,\) and  \(\rho(Jh_1),\,\cdots\,,\,\rho(Jh_l)\,\) of \(End(L^-)\) are linearly independent.     Then \(h_1,\,\cdots\,,\,h_l\,,\,Jh_1,\,\cdots\,,\,Jh_l\,\) are linearly independent.        Let \(m=\sum_{j=1}^l\,a_jh_j\in M_0\).      Then  if  \(\rho(m)=0\) we have  \(-\sum a_jc_{ij}=0\,\)  for \(\,1\leq \forall i\leq l\,\).   
   Since the Cartan matrix \(c_{ij}\) of \(\Phi\) is non-singular, all \(a_j=0\).    
Hence  \(\rho(h_1),\,\cdots\,,\,\rho(h_l)\,\) are linearly independent, and so \(h_1,\,\cdots\,,\,h_l\,\) are  linearly independent also.    Similarly \(n=\sum_{j=1}^l\, a_jJh_j\in Ker\,\rho\) yields also 
\(\sum a_jc_{ij}=0\), \(\,1\leq \forall i\leq l\,\).   Hence  \(\rho(Jh_1),\,\cdots\,,\,\rho(Jh_l)\,\) are linearly independent, so are \(Jh_1,\,\cdots\,,\,Jh_l\,\). 
\hfil\qed

\begin{definition}~~~
\begin{enumerate}
\item
Let \(H_o\) be the Cartan subalgebra of \(M_o\) with the basis \(h_i\,;\,1\leq i\leq l\).
\item
Let \(H_r\) be the real form of \(H_o\): \(H_o=H_r+\sqrt{-1}H_r\).    
\item
Let  \(K\,\) be the subalgebra  of \(M\) generated over \(\mathbf{R}\) by  \(\,\mathbf{H}\otimes_{\mathbf{C}} H_o=H_o+JH_o\).   
\item
Let \(H^{\bot}_r\) be the subalgebra of \(K\) generated by \(\sqrt{-1}H_r+JH_o\).
\end{enumerate}
\end{definition}

\begin{lemma}\label{K}~~~
\begin{enumerate}
\item
\(H_r\) is a maximal commutative subalgebra of \(M\).     
\item
\(H^{\bot}_r\) is an ideal of \(K\) complementary to \(H_r\).
\item
We have 
\begin{equation}
 [\,H_r\,,\,K\,]=\,0\,.
 \end{equation}
\item
\begin{equation}\label{1bracket}
[\,K\,,\,K\,]\,=\,H^{\bot}_r\,.
\end{equation}
\end{enumerate}
\end{lemma}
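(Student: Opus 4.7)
The plan is to leverage (i) the commutation relations baked into the defining ideal $I$, (ii) the quaternion bracket formula (\ref{qLiebra}), and (iii) the Jacobi identity, establishing the four claims roughly in the order (3), (1), (2), (4).

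I would first prove part (3), $[H_r, K] = 0$, since it underpins the other three. Since $K$ is generated over $\mathbf{R}$ by $\{h_j,\sqrt{-1}h_j,Jh_j,\sqrt{-1}Jh_j\}_{j=1}^{l}$, it suffices to verify $[h_i,\cdot]=0$ on each generator. The cases $h_j$ and $Jh_j$ are immediate from the generators of $I_o$ and $I^{\ast}$ respectively; the cases $\sqrt{-1}h_j$ and $\sqrt{-1}Jh_j$ follow by $\mathbf{C}$-linear extension via (\ref{qLiebra}), yielding for instance $[h_i,\sqrt{-1}Jh_j]=\sqrt{-1}J[h_i,h_j]=0$. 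The Jacobi identity then propagates $[h_i,\cdot]=0$ to every iterated bracket in $K$.

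Parts (1) and (2) follow next. For (1), commutativity of $H_r$ is immediate from $[h_i,h_j]\in I_o$; I read ``maximal commutative'' in the toral/Cartan sense appropriate to the triangular decomposition $\mathfrak{g}=\mathfrak{k}\oplus\mathfrak{e}\oplus\mathfrak{f}$ of the introduction. Indeed, $\operatorname{ad}(H_r)$ acts diagonally on $M$ with real eigenvalues given by the roots, and any enlargement by an element of $K\setminus H_r$ either introduces purely imaginary eigenvalues (for $\sqrt{-1}H_r$) or fails to act diagonally on root vectors $Je_j$ (for $JH_o$, as the formula for $\operatorname{ad}(\sqrt{-1}h')(Je_j)$ shows). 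Hence $H_r$ is maximal among abelian subalgebras giving the real root decomposition. For (2), $H_r^{\bot}$ is a subalgebra by definition; the $\sigma$--$\tau$ bigrading places $H_r$ in the $(+,+)$--eigenspace while each $\mathbf{R}$-generator of $H_r^{\bot}$ lies outside that eigenspace, forcing $H_r\cap H_r^{\bot}=0$ and hence $K=H_r\oplus H_r^{\bot}$. The ideal property then reduces to $[H_r,H_r^{\bot}]=0$ (by part (3)) together with $[H_r^{\bot},H_r^{\bot}]\subseteq H_r^{\bot}$ (subalgebra closure).

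Part (4) is the principal technical step. The inclusion $[K,K]\subseteq H_r^{\bot}$ is automatic since by part (3)
$$[K,K] = [H_r+H_r^{\bot},\,H_r+H_r^{\bot}] = [H_r^{\bot},H_r^{\bot}] \subseteq H_r^{\bot}.$$
The reverse inclusion $H_r^{\bot}\subseteq [K,K]$ is the main obstacle: one must exhibit each generator $\sqrt{-1}h_i$, $Jh_i$, $\sqrt{-1}Jh_i$ of $H_r^{\bot}$ as a bracket of elements of $K$. Applying (\ref{qLiebra}) one obtains key identities such as
\begin{equation*}
[\sqrt{-1}Jh_i, Jh_j] = \sqrt{-1}(h_ih_j+h_jh_i), \qquad [\sqrt{-1}h_i, \sqrt{-1}Jh_j] = J(h_ih_j),
\end{equation*}
together with their $J$-twisted analogues. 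One then argues that appropriate $\mathbf{R}$-linear combinations of these anticommutator-type brackets, combined with the nondegeneracy of the Cartan matrix (as exploited in Proposition \ref{h}), recover $\sqrt{-1}h_i$, $Jh_i$, and $\sqrt{-1}Jh_i$. Proposition \ref{slH} carries out exactly this kind of calculation explicitly for $\mathfrak{sl}(n,\mathbf{H})$ (e.g.\ extracting $\sqrt{-1}E_{11}$ from the pair $[\sqrt{-1}Jh_1,Jh_1]$ and $[\sqrt{-1}Jh_1,Jh_2]$); the general case rests on the same linear-algebraic inversion of the Cartan anticommutator pairing $(h_i,h_j)\mapsto h_ih_j+h_jh_i$, which is where the bulk of the technical work lies.
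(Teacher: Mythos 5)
Your parts (1)--(3) track the paper's own proof closely. The paper proves (3) exactly as you do: the generator identity \([rh_i,\,cJh_j]=rc\,[h_i,\,Jh_j]=0\) (valid because \(h_i\) is \(\tau\)-real, so the bracket against the \(J\)-sector is \(\mathbf{C}\)-linear in the way you describe), followed by Jacobi-identity induction over the iterated brackets spanning \(K\); and it reduces (2) and the inclusion \([K,K]\subseteq H_r^{\bot}\) to (3) just as you do. Two cautions at the same level of rigor as the paper: in (2), the observation that each generator of \(H^{\bot}_r\) lies outside the \((+,+)\)-bieigenspace does not by itself force \(H_r\cap H^{\bot}_r=0\), because the bracket of two odd elements is even (e.g.\ \([Jx,Jy]\) is \(\sigma\)- and \(\tau\)-even); you need the specific vanishings \([Jh_i,Jh_j]=0\) and \([\sqrt{-1}h_i,\sqrt{-1}h_j]=-[h_i,h_j]=0\) to kill the dangerous combinations. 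And in (1) your ``toral'' reinterpretation of maximality is a defensible gloss --- indeed some such reading is forced, since \(H_o=H_r+\sqrt{-1}H_r\) is itself commutative --- whereas the paper argues by exhibiting, for each \(x\in\sqrt{-1}H_r+JH_o\), some \(y\in K\) with \([x,y]\neq 0\).

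The genuine gap is in your part (4), the reverse inclusion. First, the displayed identities are miscomputed: by (\ref{qLiebra2}) one gets \([\sqrt{-1}Jh_i,\,Jh_j]=-\sqrt{-1}(h_ih_j+h_jh_i)\) and \([\sqrt{-1}h_i,\,\sqrt{-1}Jh_j]=-J(h_ih_j+h_jh_i)\); the unsymmetrized \(J(h_ih_j)\) is not even a Lie element. Second, and decisively, the proposed ``linear-algebraic inversion of the anticommutator pairing'' cannot return the degree-one generators \(\sqrt{-1}h_i\), \(Jh_i\), \(\sqrt{-1}Jh_i\): in \(M=L/I\) the anticommutator brackets are \emph{new} elements of \(K\), of degree two in the free associative model of \(L\), and no generator of \(I\) collapses the pure-\(h\) sector back to degree one; their linear independence from \(H_o+JH_o\) is precisely why \(K\) strictly contains \(H_o+JH_o\). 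The \(\mathfrak{sl}(n,\mathbf{H})\) computation you invoke does the opposite of what you need --- in Proposition \ref{slH} the bracket \([\sqrt{-1}Jh_1,Jh_2]=-2\sqrt{-1}E_{22}\) is used to \emph{generate the extra direction} \(\sqrt{-1}E_{22}\notin\mathbf{C}\mathfrak{h}_o\), and the collapse \(h_ih_j+h_jh_i\in\) (diagonal matrices) is special to that matrix realization, with no counterpart in the abstractly presented \(M\). Nondegeneracy of the Cartan matrix, as used in Proposition \ref{h}, controls only the linear pairing \(h\mapsto(\lambda(h))\) and gives no purchase on a quadratic one. To be fair, the paper itself offers no computation for this direction --- its entire argument is the assertion \([H^{\bot}_r,K]=[H^{\bot}_r,H^{\bot}_r]=H^{\bot}_r\) --- so what is actually established, both in your proposal and in the paper, is only \([K,K]\subseteq H^{\bot}_r\); but your specific route to the converse, as written, would fail.
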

\begin{proof}~~~
 Being the real part of Cartan subalgebra \(H_o\,\), \(H_r\,\) is commutative.    While for any \(x\in \sqrt{-1}H_r+JH_o\subset K\) there exists a \(y\in K\) which does not commute with \(x\).     Hence there is no commutative subalgebra of \(K\) bigger than \(H_r\),  nor in \(M\). 
 We shall prove the 3rd assertion.   We have 
\([H_r, H_o]=0\,\) and \([ \ H_r,\,J H_o\,]=0\,\).
The latter follows from \([rh_i\,,\,cJh_j\,]=rc[\,h_i,\,Jh_j\,]=0\) for \(\forall r\in \mathbf{R},\, \forall c\in\mathbf{C}\).
Jacobi identity yields \([H_r,\,[H_o,JH_o]]=0\) and \([H_r,\,[JH_o,JH_o]]=0\).   Next it holds for example that 
\[[H_r,\,[\,[JH_o,H_o],\,JH_o]]=0,\mbox{ and }
[H_r,\,[\,[JH_o,\,JH_o],\,H_o]]=0\,.\]
In this way we obtain \([\,H_r\,,\, K\,]\,=\,0\,\).
Then \([H_r^{\bot},\,K]=[H^{\bot}_r,\,H^{\bot}_r]=H^{\bot}_r\), so \(H^{\bot}_r\) is an ideal of \(K\).   
 This implies the 4th assetion.    
  \end{proof}
  
{\bf Example.}~~~\\
For \(M=\mathfrak{sl}(n,\mathbf{H})\) and \(M_o=\mathfrak{sl}(n,\mathbf{C})\) we have 
\begin{eqnarray*}
H_r\,&=&\sum_{i=1}^{n-1}\mathbf{R}h_i\,=\sum_{i=1}^{n-1}\mathbf{R}(E_{i+1\,i+1}-E_{i\,i})\,,\quad K= H_o+\sqrt{-1}\mathbf{R}E_{1\,1}+J (H_o+\mathbf{C}E_{1\,1})
\\[0.2cm]
H^{\bot}_r &=&\,\sqrt{-1}\sum_{i=1}^{n}\mathbf{R}E_{i\,i}\,+\,J(H_o+\mathbf{C}E_{1\,1})\,,
.\end{eqnarray*}
where \(H_o\) is the Cartan subalgebra of \(\mathfrak{sl}(n,\mathbf{C})\) and \(H_r\) is the real form of \(H_o\).   
This was shown in the proof of Proposition \ref{slH}.

\begin{theorem}\label{quatofsimpleLie}~~~
\begin{enumerate}
\item
 \(M\) is a quarternion Lie algebra generated by  
 \[\{\,e_i,\,f_i,\,h_i\,, \,Je_i,\,Jf_i,\,Jh_i\,;\quad 1\leq l\leq l\,\}\]
 that are subordinate to the relations
 \begin{equation*}
 [\,h_i,\,h_j\,]=0\,, \quad   [\,e_i\,,\,f_j\,] =\delta_{ij}h_i\,,\quad
  [\,h_i\,,\,e_j\,]=c_{ji}e_j\,,\quad
[\,h_i\,,\,f_j\,]=-\,c_{ji}f_j\,. \quad  (\ref{S1})
\end{equation*}
and
\begin{eqnarray}\label{relation2}
  [\,h_i,\,Jh_j\,]=0,\,& \quad  [\,Jh_i,\,Jh_j\,]=0\,,  & \quad [\,h_i\,,\,Jf_j\,]\,=\,-c_{ji}\,Jf_j\,, 
 \nonumber\\[0.2cm]
  [\,h_i\,,\,Je_j\,]\,=\,c_{ji}\,Je_j\,, 
&\quad [\,Jh_i\,,\,f_j\,]\,=\,- c_{ji}\,Jf_j\,, & \quad   [\,Jh_i\,,\,e_j\,]\,=\,c_{ji}Je_j\,,
\nonumber\\[0.2cm]
 [\,e_i \,,\,Jf_i\,] =\,\delta_{ij}\,Jh_i, &\quad  [\,Je_i\,,\,f_i\,] =\,\delta_{ij}Jh_i, &\quad   [\,Je_i\,,\,Jf_i\,] =-\,\delta_{ij}\,h_i \,,\nonumber\\[0.2cm]
   \,[\,Jh_i\,,\,Je_j\,]\,=\,-c_{ji}\,e_j\,,&\quad
[\,Jh_i\,,\,Jf_j\,]\,=\, c_{ji}f_j\,&\,.
\end{eqnarray}
\item
 \(M\) is the quarternification of  \(M_o\).
 \item 
 The real form \(H_r\) of the 
 Cartan subalgebra \(H_o\) of \(M_o\) is a maximal commutative subalgebra of \(M\).    
 \item
 Let \(E\) be the \(\mathbf{C}\)-module generated by  \(\{e_i,\,Je_i\,;\,i=1,\cdots,l\,\}\), and  \(F\) be the \(\mathbf{C}\)-module generated by  \(\{f_i,\,Jf_i\,;\,i=1,\cdots,l\,\}\).   Then \(K\), \(E\)
and \(F\) viewed as real Lie subalgebras of \(M\) give the triangular decomposition of \(M\):
\begin{equation}
M=K+E+F\,.
\end{equation}   
 
 \end{enumerate}
\end{theorem}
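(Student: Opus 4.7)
Parts (1) and (2) are essentially immediate from the construction of $M$. By Definition~\ref{qfication}, $M = L/I$ is the quaternion Lie algebra with generators (the images of $X+JX$) and defining relations obtained by setting to zero the generators of the ideal $I = I_o + I^{\ast}$ displayed in (\ref{io}) and (\ref{I}); these are precisely the relations (\ref{S1}) and (\ref{relation2}), which proves (1). For (2), the inclusion $I \cap L_o = I_o$ already noted yields an embedding $M_o = L_o/I_o \hookrightarrow L/I = M$ with image in $M^+$. The further generators $Jh_i, Je_i, Jf_i$ lie in $M^-$, and together with $M_o$ they contain every generator of $M$, so $M_o + \mathrm{span}_{\mathbf{R}}\{Jh_i, Je_i, Jf_i\}$ generates $M$ as a real Lie algebra; hence $M$ is the quaternification of $M_o$ by Definition~\ref{qf}. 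Part (3) is Lemma~\ref{K}(1).

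For part (4), my plan is to prove that $S := K + E + F$ is a Lie subalgebra of $M$ and then separate $K$, $E$, $F$ by weights. Since $S$ already contains every generator of $M$, proving $S$ is a subalgebra will force $S = M$. The inclusions $[K,K] \subseteq K$, $[E,E] \subseteq E$, $[F,F] \subseteq F$ are automatic. To verify $[K, E] \subseteq E$ and $[K, F] \subseteq F$, I argue by induction on bracket length in $K$ and in $E$ (respectively $F$), using the base cases $[h_i, e_j] = c_{ji} e_j$, $[Jh_i, e_j] = c_{ji} Je_j$, $[h_i, Je_j] = c_{ji} Je_j$, $[Jh_i, Je_j] = -c_{ji} e_j$ (and their $f$-analogues) from (\ref{relation2}), with Jacobi propagating the inclusion.

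For the remaining and most delicate step, the approach is to establish the stronger statement that $\mathrm{ad}(e)(S) \subseteq S$ for every $e \in E$, by induction on $\ell(e)$. The base case $\ell(e) = 1$, i.e.~$e \in \{e_i, Je_i\}$, reduces to showing $[e, F] \subseteq S$, which follows from the base relations $[e_i, f_j] = \delta_{ij} h_i$, $[Je_i, f_j] = [e_i, Jf_j] = \delta_{ij} Jh_i$, $[Je_i, Jf_j] = -\delta_{ij} h_i$ (all in $K$) combined with a subsidiary induction on the length of the $F$-element via Jacobi and the already-established $[K, F] \subseteq F$. For the inductive step $\ell(e) = n > 1$, write $e = [e_1, e_2]$ with $\ell(e_i) < n$; the inductive hypothesis gives $\mathrm{ad}(e_i)(S) \subseteq S$, and then
\begin{equation*}
\mathrm{ad}(e)(s) \;=\; \mathrm{ad}(e_1)\,\mathrm{ad}(e_2)(s) \;-\; \mathrm{ad}(e_2)\,\mathrm{ad}(e_1)(s) \;\in\; S
\end{equation*}
for all $s \in S$. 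Symmetric arguments yield $\mathrm{ad}(f)(S) \subseteq S$ for all $f \in F$ and $\mathrm{ad}(k)(S) \subseteq S$ for all $k \in K$. Hence $[S, S] \subseteq S$ and $S = M$.

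Directness of $M = K \oplus E \oplus F$ then follows from the $\mathrm{ad}(H_r)$ weight decomposition: Lemma~\ref{K}(3) gives $[H_r, K] = 0$, placing $K$ in the zero weight space; the relations $\mathrm{ad}(h) e_i = \alpha_i(h) e_i$ and $\mathrm{ad}(h) Je_i = \alpha_i(h) Je_i$ place $E$ in the sum of positive-root weight spaces (with $\alpha_i$ a positive simple root), and symmetrically $F$ in the negative-root weight spaces, forcing $K \cap (E + F) = E \cap F = 0$. The principal technical obstacle is the subsidiary induction for $[e_i, F] \subseteq S$ in the base case: one must verify that repeated Jacobi expansion produces only $K$-terms at shortest depth and $F$-terms at deeper depths, which relies on the defining relations pairing $e$'s with $f$'s into $K$ together with the already-established $[K, F] \subseteq F$.
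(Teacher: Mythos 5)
Your proposal is correct and follows essentially the same route as the paper: parts (1)--(3) are read off from the construction of \(M=L/I\) and Lemma \ref{K}, and part (4) is proved by showing \(K+E+F\) is a subalgebra containing all generators---via inductions on bracket length using the relations (\ref{S1}), (\ref{relation2}) and the Jacobi identity, with your ``\(K\)-terms at shortest depth, \(F\)-terms at deeper depths'' bookkeeping for \([e,F]\) playing exactly the role of the paper's step-5 parity lemma---followed by separating \(K\), \(E\), \(F\) through the \(\mathrm{ad}(H_r)\)-weight decomposition, just as in the paper's steps 3 and 7. Your only deviations are cosmetic: you close the subalgebra argument with \(\mathrm{ad}([e_1,e_2])=[\mathrm{ad}\,e_1,\mathrm{ad}\,e_2]\) where the paper uses the normalizer trick (the set of \(x\) with \([x,\,K+E+F]\subseteq K+E+F\) is a subalgebra containing the generators), and you justify part (2) by the stated identity \(I\cap L_o=I_o\) rather than by the representation-theoretic embedding of \(M_o\) via Proposition \ref{h}.
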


{\it Proof}~~~
From the definition \ref{qfication}  \(M\) is the quarternification of \(M_o\).    
The theorem will be proved by the well known argument that is used to prove Theorem \ref{CHC}.   
\begin{enumerate}
\item \(K\) {\it is isomorphically embedded in} \(M\).\\
This follows from Proposition \ref{h}
\item
{\it The subspace \(\sum\,\mathbf{C}e_j\,+\,\sum\,\mathbf{C}f_j\,+\,\sum\,\mathbf{C}h_j\) of \(L\) maps isomorphically into \(M\).   Hence \(M_o\) is a subalgebra of \(M\). }\\
 For a fixed \(i\), \(\mathbf{C}e_i+\mathbf{C}f_i+\mathbf{C}h_i\) is  a homomorphic image of \(\mathfrak{sl}(2,\mathbf{C})\).  The latter being simple and  \(h_i\neq 0\) in \(M\) by Proposition \ref{h}, 
so \(\mathbf{C}e_i+\mathbf{C}f_i+\mathbf{C}h_i\) is isomorphic to \(\mathfrak{sl}(2,\mathbf{C})\).
Now the set \(\{e_j,\,f_j,\,h_j\,;\, 1\leq j\leq l\,\}\) is linearly independent as the eigenvectors of the \(ad\,h_j\), so the subspace \(\sum\,\mathbf{C}e_j\,+\,\sum\,\mathbf{C}f_j\,+\,\sum\,\mathbf{C}h_j\) of \(L\) maps  isomorphically into \(M\).   
\item
{\it Let \(\epsilon_i\) denote the basis \(e_i\) or \(Je_i\) and let \(\phi_i\)  denote the basis \(f_i\) or \(Jf_i\).    
 We write 
\begin{eqnarray*}
[\,\epsilon_{i_1}\,\cdots\,\epsilon_{i_t}\,] \,&=\,[\epsilon_{i_1},\,[\epsilon_{i_2},\,\dots,\,[\epsilon_{i_{t-1}},\,\epsilon_{i_t}\,]\,]\, ]\\[0.2cm]
[\,\phi_{i_1}\,\cdots\,\phi_{i_t}\,] \,&=\, [\phi_{i_1},\,[\phi_{i_2},\,\dots,\,[\phi_{i_{t-1}},\,\phi_{i_t}\,]\,]\, ]\,.
\end{eqnarray*}
 Then }
\begin{eqnarray}
[\,h_j\,,\,[\,\epsilon_{i_1}\,\cdots\,\epsilon_{i_t}\,] \,]\,&=\,\left (c_{i_1j}+\cdots+c_{i_tj}\right)\,[\,\epsilon_{i_1}\,\cdots\,\epsilon_{i_t}\,] ,\label{adh}
\\[0.2cm]
[\,h_j\,,\,[\,\phi_{i_1}\,\cdots\,\phi_{i_t}\,] \,]\,&=\,-\left (c_{i_1j}+\cdots+c_{i_tj}\right)\,[\,\phi_{i_1}\,\cdots\,\phi_{i_t}\,] .\label{adjh}
\end{eqnarray}
 For \(t=1\) we have \([h_j,e_i]=c_{ij}e_i\) and  \([h_j,Je_i]= c_{ij}Je_i\) in \(M\).   We have, for example,  
\[[h_j,[e_i\,,Je_k]\,]\stackrel{{\rm Jacobi}}{=}[e_i,[h_j,Je_k]]+[[h_j,e_i],Je_k]=(c_{ij}+ c_{kj})[e_i,Je_k].\]
The general case follows by induction and the Jacobi identity.   

\item
{\it Let \(E\) and \(F\) be the subalgebras of \(M\) generated by \(\{e_i,\,Je_i\,;\,i=1,\cdots, l\,\} \) and  by  \(\{f_i,\,Jf_i\,;\,i=1,\cdots, l\,\} \) respectively.   Then 
\begin{equation}
[\,Jh_j\,,\,[\,\epsilon_{i_1}\,\cdots\,\epsilon_{i_t}\,] \,]\,\in E
 ,\qquad \,[\,Jh_j\,,\,[\,\phi_{i_1}\,\cdots\,\phi_{i_t}\,] \,]\,\in \,F .
\end{equation}
}  
 By virtue of the relations \([Jh_j,e_i]=c_{ij}Je_i\) and  \([Jh_j,Je_i]=-c_{ij}e_i\),  we have   
\[[Jh_j, \,[e_i,[e_k,Je_l]]]=-c_{kj}[e_i,[e_k,e_l]]- c_{kl}[e_i,[e_k,\,e_l]]-c_{ij}\,J[e_i,[e_k,e_l]].\]
By similar calculations we obtain the first assertion.   
The other cases follow by an induction with the aid of the relations
 \([Jh_j,f_i]=-c_{ij}Jf_i\).
 
\item
{\it  Let \(\pi(i_1,\cdots, i_t)=\sharp\,\{s:\,\epsilon_{i_s}=Je_{i_s},\,1\leq s\leq t\}\).   The parity \(\pi([\,\epsilon_{i_1}\,\cdots\,\epsilon_{i_t}\,] )\) of \([\epsilon_{i_1}\,\cdots\,\epsilon_{i_t}] \in M\) is  defined by  \((-1)^{\pi(i_1,\cdots, i_t)}\).   It is equal to \(1\) ( respectively \(\,-1\)) if  there are  even ( respectively odd ) number of  \(\epsilon_{i_s}\)'s that are equal to \(Je_{i_s}\) among \(1\leq s\leq t\).   
Similarly the parity  \(\pi([\,\phi_{i_1}\,\cdots\,\phi_{i_t}\,])\) is defined.\\
Suppose \(t\geq 2\).  Then 
\([f_j\,,\,[\,\epsilon_{i_1}\,\cdots\,\epsilon_{i_t}\,] \,]\,\in E_o\)  if \(\pi\) is even, and \([f_j\,,\,[\,\epsilon_{i_1}\,\cdots\,\epsilon_{i_t}\,] \,]\,\in JE_o\)  if \(\pi\) is odd.     Similarly \([e_j\,,\,[\,\phi_{i_1}\,\cdots\,\phi_{i_t}\,] \,]\,\in F_o\)  if \(\pi\) is even and \([e_j\,,\,[\,\phi_{i_1}\,\cdots\,\phi_{i_t}\,] \,]\,\in JF_o\)  if \(\pi\) is odd.   }   \\
We have already known that \([f_j,\,[\,e_{i_1}\,\cdots\,e_{i_t}\,] \,]\,\in E_o\) and the parity of \([\,e_{i_1}\,\cdots\,e_{i_t}\,]\) is even.   
Next the parity of \([e_i,Je_k]\) is odd, and we have 
\begin{eqnarray*}
[f_j,\,[e_i,Je_k]]&=&[e_i,[f_j,Je_k]]+[[f_j,e_i],Je_k]=-[e_i,\delta_{kj}Jh_k]-[\delta_{ji}h_i,Je_k]\\[0.2cm]
&=&\delta_{kj} c_{ik}Je_i -\delta_{ji}  c_{ki}Je_k \in JE_o.
\end{eqnarray*}
By the relations \([f_j,e_i]=-\delta_{ij}h_i\),  \([f_j,Je_i]=-\delta_{ij}Jh_i\) and the Jacobi identity we complete the argument by an induction on \(t\).

\item
{\it \(K+E+F \) is  a subalgebra of \(M\), hence coincides with \(M\).}  \\
  That  \(K+E+F\) is  a subalgebra follows from the steps 3, 4, 5.     In fact, from (\ref{adh}) and (\ref{adjh}) we have 
  \([h_i,E] \subset E\) and \([Jh_i,E]\subset E\), hence 
  \([K,\,E]\subset E\).   It follows that \(K+E\) is a subalgebra of \(M\).    Similarly \(K+F\)  is a subalgebra of \(M\).    The assertion in step 5 shows that 
  \([\epsilon_i\,,\,F\,]\subset F+K\,\).   
Since \(K+E\) is a subalgebras it follows that 
\([\,\epsilon_i\,,\,F+K+E\,]\subset F+K+E\,\).   
Similarly we have 
\([\,\phi_i\,,\,F+K+E\,]\subset F+K+E\,\).   
The relations \([h_i, \,F+K+E\,]\subset F+K+E\,\) and \([Jh_i, \,F+K+E\,]\subset F+K+E\,\) are clear.   Therefore the set of all \(x\in M\) such that \([x,\,F+K+E]\subset F+K+E\) contains \(h_i,Jh_i, e_i, Je_i. f_i, Jf_i\).    However the set of such \(x\) forms a subalgebra.   This subalgebra must be the whole \(M\).   Thus 
\(F+K+E\) is an ideal of \(M\) that contains the generators of \(M\).    It follows 
that \(M=K+E+F\).  
\item
{\it The sum \(M=K+E+F\)  is direct.  }  \\
The step 3 shows how to decompose \(M\) ( directly ) into the eigensubspaces of \(ad\,H_r\), and the directness follows.   
\hfil\qed
\end{enumerate}

We consider now the weight spaces of \(M\) with respect to the maximal commutative subalgebra \(H_r\) and describe the decomposition \(M=K+E+F\) in terms of weights.    
 For each \(\lambda:\, H_r\longrightarrow  \mathbf{R}\,\), let 
 \begin{equation}
  M_{\lambda}=\{x\in M\,;\,[h\,, x\,]=\lambda(h)x, \quad \forall h\in H_r\}\,.
 \end{equation}
 Do not confuse \(M_0\) with \(M_o=L_o/I_o\).     \(\lambda\in Hom(\,H_r\,,\,\mathbf{R}) \) will be called a {\it weight} whenever  \(M_\lambda\neq 0\).    \(M_\lambda\) is called the {\it weight space} of weight \(\lambda\).   
  If \(x,\,y\in M\) are weight vectors of weights \(\lambda,\,\mu\) then \([x\,,\,y]\) is a weight vector of weight 
\(\lambda+\mu\):   
\[[\,M_\lambda\,,\,M_\mu\,]\subset\,M_{\lambda+\mu}\,.\]
Any root of \(M_o\) is a weight of \(M\) since \(\lambda(h)\) is real for any \(\lambda\in \Phi\) and \(h\in \mathfrak{h}_r\).   From Lemma \ref{K} and the directness of the decomposition of \(M\) it follows that the null weight space \(M_0\) coincides with \(K\).  
From the relations (\ref{S1}) and (\ref{relation2}) we see that \(e_i\) and \(Je_i\) are weight vectors of weight \(\alpha_i\).   \(f_i\) and \(Jf_i\) are weight vectors of weight \(-\alpha_i\).  
  Thus all Lie products of generators \(e_i,Je_i,\,f_i,Jf_i,\,h_i,Jh_i\,\) are weight vectors.    Since every element of \(M\) is a linear combination of products of these vectors we deduce that  the weights of \(M\) coincides with the root system \(\Phi\) of \(M_o\): 
\[M=\,K\oplus \sum_{\lambda\in \Phi}\,M_{\lambda}.\]
  We know already that the weights \(\alpha_1,\cdots,\alpha_l\in Hom(H_r,\mathbf{R})\) are linearly independent, and that 
any weight \(\lambda\in \Phi\) has form \(\sum_{i=1}^l\,k_i\alpha_i\), \(k_i\in\mathbf{Z}\).   Moreover a non-zero weight \(\lambda\) has the form \(\lambda=\sum_{i=1}^lk_i\alpha_i,\,k_i\in \mathbf{Z}\), with all \(k_i\geq 0\) or all \(k_i\leq 0\).    From the discussion hitherto we have 
\begin{equation}\label{tridecomp}
K=M_0\,,\quad 
E=\sum_{\lambda\in \Phi_+}\,M_\lambda\,,\quad F=\sum_{\lambda\in \Phi_-}\,M_\lambda\,.
\end{equation}

\subsection{Root space decomposition of a quarternion Lie algebra}
   
Let \(\hat I_o\) be the ideal of \(M_o\) generated by all 
\(
e_{ij}=(ad\,e_i)^{-c_{ji}+1}(e_{j})\) and \( f_{ij}=(ad\,f_i)^{-c_{ji}+1}(f_{j})\), (\ref{S2}).
We know that \(\mathfrak{g}_o=M_o/\hat I_o\)  is a finite dimensional simple algebra with Cartan subalgebra \(\mathfrak{h}_o\) which 
is the image of \(H_o\) under the quotient map \(M_o\longrightarrow\,\mathfrak{g}_o\).     In fact \(H_o\) maps isomorphically into \(\mathfrak{g}_o\).    \(\mathfrak{h}_o\) is spanned by \(h_1,\cdots,h_l\) and the corresponding root system is \(\Phi\) ,  
Theorem\ref{Serre}, \cite{C, H} .
The Cartan decomposition of \(\mathfrak{g}_o\) is given by
\begin{eqnarray*}
&\mathfrak{g}_o\,=\,\mathfrak{h}_o\oplus\,\sum_{\lambda\in\Phi} (\mathfrak{g}_o)_\lambda\,, \\[0.2cm]
&( \mathfrak{g}_o)_{\lambda}\,=\{\xi\in \mathfrak{g}_o;\,ad(h)\,\xi\,=\lambda(h)\xi\quad \forall h\in \mathfrak{h}_o\}\,.
\end{eqnarray*}

Let \(\hat I\) be the ideal of \(M\) generated by \(\mathbf{H}\otimes_{\mathbf{C}} \hat I_o=\hat I_o+J\hat I_o\), that is, the ideal generated by 
\(e_{ij},\,Je_{ij},\,f_{ij},\,Jf_{ij}\).    \(\hat I\) is in fact the ideal written by 
the elements \([\epsilon_i,[\epsilon_i,\cdots,[\epsilon_i,\epsilon_j]]]\)  and the elements \([\phi_i,[\phi_i,\cdots,[\phi_i,\phi_j]]]\) for all \(i\neq j\).   Where \(\epsilon_i=e_i\) or \(Je_i\), \(\phi_i=f_i\) or \(Jf_i\) as was introduced before.    Let \(\mathfrak{g}\) be the quotient quarternion Lie algebra:
\begin{equation}
\mathfrak{g}=M/\hat I\,.
\end{equation}
\(\mathfrak{g}\) is the quarternification of \(\mathfrak{g}_o=M_o/\hat I_o\).    Let \(\hat I^+\) be the ideal of \(E\) generated by the elements \([\epsilon_i,[\epsilon_i,\cdots,[\epsilon_i,\epsilon_j]]]\) for all \(i\neq j\), and let \(\hat I^-\) be the ideal of \(F\) generated by the elements \([\phi_i,[\phi_i,\cdots,[\phi_i,\phi_j]]]\) for all \(i\neq j\).   Then \(\hat I^{\pm}\) become ideals of \(M\) and it holds \( \hat I=\hat I^+\oplus \hat I^-\).   
   We put \(\mathfrak{e}=E/\hat I^{+}\) and 
   \(\mathfrak{f}=F/\hat I^{-}\).      
On the other hand the subalgebra \(K\) of \(M\) is isomorphically mapped to a subalgebra \(\mathfrak{k}\) of \(\mathfrak{g}=M/\hat I\).   So  \(\mathfrak{k}\) is generated by  \(\mathbf{H}\otimes_{\mathbf{C}}\mathfrak{h}_o=\mathfrak{h}_o+J \mathfrak{h}_o\).    
Since \(\mathfrak{g}=M/\hat I\,\) and \(\, M=F\oplus K\oplus E\,\) it follows that 
\[\mathfrak{g}=\mathfrak{f}\oplus \mathfrak{k}\oplus \mathfrak{e}\,.\]
We continue to denote the generators of \(\mathfrak{g}\) by \(e_i,\,Je_i\,,\,h_i\,,\,Jh_i\,,\,f_i\,,\,Jf_i\,\).   These are the images of the generators of \(L\) under the natural homomorphism \(L\longrightarrow\,\mathfrak{g}\).
 
Let \(\mathfrak{h}_r=H_r/\hat I\).   \(H_r\) is isomorphically mapped to \(\mathfrak{h}_r\).   \(H_r\) being a maximal abelian subalgebra of \(M\), \(\mathfrak{h}_r\) is a maximal  abelian subalgebra of \(\mathfrak{g}\).      The root space decomposition of \(\mathfrak{g}\) with respect to \(\mathfrak{h}_r\)  is given by
\begin{eqnarray*}
&\mathfrak{g}\,=\,\sum_{\lambda\in\Phi} \mathfrak{g}_\lambda\,, \\[0.2cm]
& \mathfrak{h}_r\,\subset\,\mathfrak{g}_0\,,\quad  \mathfrak{g}_{\lambda}\,=\{\xi\in \mathfrak{g};\,ad(h)\,\xi\,=\lambda(h)\xi,\quad \forall h\in \mathfrak{h}_r\}  \quad\mbox{
for \(\lambda\in\Phi\)} .
\end{eqnarray*}
Since \(M_0=K\) we also have \(\mathfrak{g}_0=\mathfrak{k}\).    Now \(ad\,e_i\) and  \(ad\,f_i\) are nilpotent endomorphisms of \(\mathfrak{g}_o\),  so are the endomorphisms \(ad\,\epsilon_i\) and \(ad\,\phi_i\) nilpotent.   We know already that the root system \(\Phi\) and the Weyl group of transformations of \(H_r\) are finite.   Then the finiteness of \(\dim \mathfrak{g}\) follows by a similar discussion as in the proof of Theorem\ref{Serre}, \cite{H,C}.

\begin{theorem}
\begin{enumerate}
\item
\begin{eqnarray}
\mathfrak{g}&=&\mathfrak{g}_0\,+\,\mathfrak{e}\,+\,\mathfrak{f}\,,\\[0.2cm]
 \mathfrak{e}&=&\,\sum_{\lambda\in \Phi^+}\mathfrak{g}_\lambda\,,\quad  \mathfrak{f}\,=\,\sum_{\lambda\in \Phi^-}\mathfrak{g}_\lambda  \nonumber
\end{eqnarray}
where \(\Phi^{\pm}=\{n_1\alpha_1+\cdots+n_l\alpha_l\neq 0\,;\,n_i\geq 0\,\mbox{ ( resp. \(n_i\leq 0\) )  }\quad\forall i\,\,\} \).
\item
\begin{eqnarray*}
\mathfrak{g}_{\lambda}&=&\,\mathbf{H}\otimes (\mathfrak{g}_o)_{\lambda} \quad\mbox{ if }\,\lambda\neq 0 .\\[0.2cm]
\mathfrak{g}_0&=&\,\mathfrak{k}\,,\quad \mathfrak{e}\,=\,\mathbf{H}\otimes \mathfrak{e}_o\,,\quad \mathfrak{f}\,=\,\mathbf{H}\otimes \mathfrak{f}_o\,.
\end{eqnarray*}

\item
\begin{equation}
\dim_{\mathbf{C}}\,\mathfrak{g}_{\alpha_i}=2\,,  \quad \dim_{\mathbf{C}}\,\mathfrak{g}_{-\alpha_i}=2\,, \quad i=1,\cdots,l.  
\end{equation} 
\item
\begin{equation}
[\,\mathfrak{k}\,,\,\mathfrak{k}\,]\,=\,\mathfrak{h}^{\bot}_r\,,
\end{equation}
where \(\mathfrak{h}^{\bot}_r\) is the complementary ideal to \(\mathfrak{h}_r\) in \(\mathfrak{k}\):
\(\,\mathfrak{k}\,=\,\mathfrak{h}_r\,+\,\mathfrak{h}^{\bot}_r\).
\end{enumerate}
 \end{theorem}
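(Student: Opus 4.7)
The plan is to deduce the root space decomposition of $\mathfrak{g}=M/\hat I$ by passing the triangular decomposition $M=K\oplus E\oplus F$ from Theorem~\ref{quatofsimpleLie} through the quotient, refined by the $\mathrm{ad}\,H_r$-weight analysis already set up in that proof. First, for part~(1), I confirm that $\hat I$ respects the decomposition: the ideals $\hat I^\pm$ are by construction generated by nested brackets of $\epsilon_i$'s (resp.\ $\phi_i$'s), hence by formula (\ref{adh}) they are homogeneous for the $\mathrm{ad}\,H_r$-weight with $\hat I^+\subset E$ and $\hat I^-\subset F$. Since $K$ is isomorphically embedded in $\mathfrak{g}$ (so $\hat I\cap K=0$), the quotient inherits the direct sum decomposition $\mathfrak{g}=\mathfrak{k}\oplus\mathfrak{e}\oplus\mathfrak{f}$ with $\mathfrak{e}=E/\hat I^+$ and $\mathfrak{f}=F/\hat I^-$. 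Refining $\mathfrak{e}$ and $\mathfrak{f}$ by the weight grading inherited from (\ref{adh}) and (\ref{adjh}) then yields $\mathfrak{e}=\sum_{\lambda\in\Phi^+}\mathfrak{g}_\lambda$ and $\mathfrak{f}=\sum_{\lambda\in\Phi^-}\mathfrak{g}_\lambda$, while $\mathfrak{g}_0=\mathfrak{k}$ follows from $M_0=K$ in Lemma~\ref{K}.

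For part~(2), the easy inclusion $\mathbf{H}\otimes_{\mathbf{C}}(\mathfrak{g}_o)_\lambda\subseteq\mathfrak{g}_\lambda$ is immediate from the relations (\ref{S1}) and (\ref{relation2}): by induction on the length of nested brackets, if $[h_i,x]=\lambda(h_i)x$ for $x\in(\mathfrak{g}_o)_\lambda$, then $[h_i,Jx]=\lambda(h_i)Jx$ as well, so $Jx\in\mathfrak{g}_\lambda$. For the reverse inclusion I would use the $\sigma$-eigenspace decomposition $\mathfrak{g}_\lambda=(\mathfrak{g}_\lambda\cap\mathfrak{g}^+)+(\mathfrak{g}_\lambda\cap\mathfrak{g}^-)$, exploited via the parity analysis of step~6 of the preceding proof: nested brackets of $\epsilon_i$'s land in $\mathfrak{g}^+$ or $\mathfrak{g}^-$ according to the parity of the number of $J$-bearing factors. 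In $\mathfrak{g}^+\cap\mathfrak{g}_\lambda$, any even-parity nested bracket reduces modulo $\hat I^+$ to a $\mathbf{C}$-multiple of the corresponding pure-$e$ bracket of weight $\lambda$, which spans the one-dimensional $(\mathfrak{g}_o)_\lambda$ by Serre's Theorem~\ref{Serre}; the odd-parity summand in $\mathfrak{g}^-\cap\mathfrak{g}_\lambda$ similarly reduces to its $J$-translate. Summing over $\Phi^\pm$ yields $\mathfrak{e}=\mathbf{H}\otimes\mathfrak{e}_o$ and $\mathfrak{f}=\mathbf{H}\otimes\mathfrak{f}_o$.

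Parts~(3) and~(4) follow quickly. Part~(3) is immediate from (2): $\dim_{\mathbf{C}}(\mathfrak{g}_o)_{\pm\alpha_i}=1$ forces $\dim_{\mathbf{C}}\mathfrak{g}_{\pm\alpha_i}=2$, with explicit bases $\{e_i,Je_i\}$ and $\{f_i,Jf_i\}$ respectively. Part~(4) follows by transporting Lemma~\ref{K}(iv), $[K,K]=H^{\bot}_r$, through the isomorphism $K\cong\mathfrak{k}$: since $\hat I\cap K=0$, both the bracket and the complementary ideal $H^{\bot}_r\mapsto\mathfrak{h}^{\bot}_r$ pass faithfully to the quotient, giving $[\mathfrak{k},\mathfrak{k}]=\mathfrak{h}^{\bot}_r$.

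The main obstacle is the reverse inclusion in part~(2): one must show that modulo $\hat I^+$, every even-parity nested bracket of $\epsilon_i$'s of weight $\lambda\neq 0$ collapses onto the one-dimensional $(\mathfrak{g}_o)_\lambda$, with no residual $\mathbf{C}$-independent contribution introduced by the $J$-bearing factors. This is the quaternionic analogue of Serre's dimension count, and the argument should parallel his original proof: the nilpotency of $\mathrm{ad}\,\epsilon_i$ and $\mathrm{ad}\,\phi_i$ on $\mathfrak{g}$, guaranteed by the defining relations of $\hat I$, together with the finiteness of the Weyl group action on $\Phi$, must be shown to force the root-space dimensions to be exactly those of the classical case doubled by $\mathbf{H}$.
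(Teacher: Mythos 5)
Your outline follows essentially the same route as the paper: part (1) is obtained, as in the text preceding the theorem, by passing the triangular decomposition \(M=K\oplus E\oplus F\) of Theorem \ref{quatofsimpleLie} through the quotient by the weight-homogeneous ideals \(\hat I^{\pm}\) (with \(\mathfrak{g}_0=\mathfrak{k}\) coming from \(M_0=K\)), and part (4) is exactly the paper's appeal to Lemma \ref{K} and (\ref{1bracket}) transported through the isomorphism \(K\simeq\mathfrak{k}\). The genuine defect is your closing paragraph: you leave the reverse inclusion of part (2) — and with it part (3) — as an unresolved ``main obstacle,'' to be settled by a quaternionic analogue of Serre's dimension count using local nilpotency of \(ad\,\epsilon_i\), \(ad\,\phi_i\) and finiteness of the Weyl group. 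No such argument is needed, and the paper uses none here; that machinery enters only earlier, for the finiteness of \(\dim\mathfrak{g}\). What you miss is that the parity reduction you invoke is an \emph{exact identity}, not a reduction ``modulo \(\hat I^+\)'': in the free quaternion Lie algebra, built inside \(A=\mathbf{H}\otimes_{\mathbf{C}}A_o\) with the bracket (\ref{qLiebra2}), one has \([Je_i,e_k]=J[e_i,e_k]\) and \([Je_i,Je_k]=-[e_i,e_k]\), so every nested bracket \([\,\epsilon_{i_1}\cdots\epsilon_{i_t}\,]\) equals \(\pm[\,e_{i_1}\cdots e_{i_t}\,]\) (even parity) or \(\pm J[\,e_{i_1}\cdots e_{i_t}\,]\) (odd parity) already in \(L\), hence in \(\mathfrak{g}\). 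There is therefore no possible ``residual \(\mathbf{C}\)-independent contribution'' from the \(J\)-bearing factors: since \(\mathfrak{g}_\lambda\) for \(\lambda\neq 0\) is spanned by such monomials of weight \(\lambda\), one gets \(\mathfrak{g}_\lambda=(\mathfrak{g}_o)_\lambda+J(\mathfrak{g}_o)_\lambda=\mathbf{H}\otimes_{\mathbf{C}}(\mathfrak{g}_o)_\lambda\) directly from Serre's Theorem \ref{Serre} applied to \(\mathfrak{g}_o\), which is precisely how the paper concludes.

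For part (3) the paper's count is cheaper still, and your proposed detour through nilpotency obscures it: since \(\alpha_1,\dots,\alpha_l\) are linearly independent, a monomial \([\,\epsilon_{i_1}\cdots\epsilon_{i_t}\,]\) has weight \(\alpha_{i_1}+\cdots+\alpha_{i_t}\), which can equal a simple root \(\alpha_j\) only when \(t=1\); hence the only monomials of weight \(\alpha_j\) are \(e_j\) and \(Je_j\), so \(\mathfrak{g}_{\alpha_j}=\mathbf{C}e_j+\mathbf{C}Je_j\) and \(\dim_{\mathbf{C}}\mathfrak{g}_{\alpha_j}=2\), the lower bound \(\dim_{\mathbf{C}}\mathfrak{g}_{\alpha_j}\geq 2\) being read off from \(ad(h_i)e_j=\alpha_j(h_i)e_j\) and \(ad(h_i)Je_j=\alpha_j(h_i)Je_j\) as in the paper (and symmetrically for \(-\alpha_j\)). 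In short: your proof is structurally the paper's proof, but as written it does not close, because the one step you flag as open is exactly the step the paper settles by the two observations above — the exact parity identities in \(L\) and the length-one weight count for simple roots — neither of which requires any Weyl-group or nilpotency input.
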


\begin{proof}~~~
We have
\begin{eqnarray*}
ad(h_i)(e_j)&=&\alpha_j(h_i)e_j\,,\quad ad(h_i)(Je_j)=\alpha_j(h_i)Je_j\,, \\[0.2cm]
ad(h_i)(f_j)&=&-\alpha_j(h_i)f_j\,,\quad ad(h_i)(Jf_j)=-\alpha_j(h_i)Jf_j\,
\end{eqnarray*}
Hence \(\dim\,\mathfrak{g}_{\alpha_j}\geq 2\).   
As we have seen in steps 5 and 6 of the proof of Theorem \ref{quatofsimpleLie}, \(\,\mathfrak{e}\) is generated by \(\{e_i\,,\,Je_i\,;\,i=1,\,\cdots,l\,\}\) so is spanned by monomials in these elements.    
 All such monomials are weight vectors.    Since \(\alpha_j\,;\,1\leq j\leq l\), are linearly independent the only monomials which have weight \(\alpha_j\) are \(e_j\) and \(Je_j\).   
Hence \(\mathfrak{g}_{\alpha_j}=(\mathfrak{g}_o)_{\alpha_j}+J(\mathfrak{g}_o)_{\alpha_j}\).   Thus \(\dim\,\mathfrak{g}_{\alpha_j}=2\).   Similarly \(\dim \,\mathfrak{g}_{-\alpha_j}=2\).    It follows that \(\mathfrak{g}_{\lambda}=(\mathfrak{g}_o)_{\lambda}+J(\mathfrak{g}_o)_{\lambda}=\mathbf{H}\otimes_{\mathbf{C}}(\mathfrak{g}_o)_{\lambda}\) for any root \(\lambda\neq 0\).    
The others are proved in a routine method.   The last assertion follows from Lemma \ref{K} and (\ref{1bracket})
\end{proof}


{\bf Acknowledgement.}

I would like to acknowledge that Professor J. Sekiguchi of Tokyo university of agriculture and technology suggested me several fundamental facts on compact Lie algebras.     Professor K. Furutani of Tokyo university of science read the early version of this work and gave me several advices.      My thanks goes also to Professor Y. Yoshii of Iwate University for his interest to this work.

\end{document}